\theoremstyle{definition}
\newtheorem{theorem}{Theorem}[section]
\newtheorem{prop}[theorem]{Proposition}
\newtheorem{cor}[theorem]{Corollary}
\newtheorem{defn}[theorem]{Definition}
\newtheorem{exam}[theorem]{Example}
\newtheorem{thm}[theorem]{Theorem}
\newtheorem*{theorema}{Theorem A}
\newtheorem*{theoremb}{Theorem B}
\newtheorem*{theoremc}{Theorem C}
\newcommand{\C}{{\mathbb C}}
\newcommand{\Q}{{\mathbb Q}}
\newcommand{\Spec}{\mbox{Sp}}
\newcommand\FF{{\mathcal F}}
\newcommand\LL{{\mathcal L}}
\newcommand\MM{{\mathcal M}}
\newcommand\PP{{\mathcal P}}
\newcommand\PMF{{\PP\kern-2pt\MM\FF}}
\newcommand\PML{{\PP\kern-2pt\MM\LL}}
\newcommand{\T}{{\mathbb T}}
\newcommand{\fsubd}{\mathrel{{\scriptstyle\searrow}\kern-1ex^d\kern0.5ex}}
\newcommand{\bsubd}{\mathrel{{\scriptstyle\swarrow}\kern-1.6ex^d\kern0.8ex}}
\newcommand{\fsubeq}{\mathrel{\raise-.7ex\hbox{$\overset{\searrow}{=}$}}}
\newcommand{\bsubeq}{\mathrel{\raise-.7ex\hbox{$\overset{\swarrow}{=}$}}}
\newcommand{\tsh}[1]{\left\{\kern-.9ex\left\{#1\right\}\kern-.9ex\right\}}
\numberwithin{equation}{section}
\newenvironment{myeq}[1][]
{\stepcounter{theorem}\begin{equation}\tag{\thetheorem}{#1}}
	{\end{equation}}
\newenvironment{meq*}[1][]
{\stepcounter{theorem}\begin{equation*}\tag{\thetheorem}{#1}}
	{\end{equation*}}
\newtheorem{subsec}[theorem]{}
\begin{document}

\title[Equivariant rational projective spaces]{Rational stable homotopy type of equivariant projective spaces and Grassmannians}

\author{Samik Basu}
\address{Stat-Math Unit\\ Indian Statistical Institute\\ Kolkata 700108\\ India}
\email{samik.basu2@gmail.com}
\author{Vanny Doem}
\address{Math Department\\ Indian Institute of Technology Kharagpur\\ Kharagpur 721302\\ India}
\email{vanny.doem@gmail.com}
\author{Chandal Nahak}
\address{Math Department\\ Indian Institute of Technology Kharagpur\\ Kharagpur 721302\\ India}
\email{cnahak@maths.iitkgp.ac.in}
\keywords{Complex projective spaces, Grassmannians, equivariant homotopy theory, rational homotopy theory.}

\subjclass[2020]{Primary 55N91, 55P62; Secondary 55P91,	57S15}
\date{\today}

\maketitle

\begin{abstract}
\leftskip=0cm \rightskip=0cm
We prove explicit rational stable splittings of equivariant complex projective spaces $\C P(V)$ and Grassmannians $Gr_n(V)$, for complex representations $V$. When $V$ is a sum of one-dimensional representations, both $\C P(V)$ and $Gr_n(V)$ are rationally a wedge of representation spheres. For general finite groups $G$ and $V$  a sum of irreducible representations which are not necessarily one-dimensional, we show that $\C P(V)$ splits rationally as a wedge of Thom spaces over irreducible factors in $V$. For $Gr_n(V)$, the factors in the corresponding rational splitting are a smash product of Thom spaces over lower Grassmannians on irreducible factors in $V$.
\end{abstract}


\section {Introduction}

In equivariant stable homotopy theory, one of the most useful splitting results is the tom Dieck splitting \cite[Satz 2]{tD75}, \cite[\S V.11]{LMSM86}: The fixed point spectrum of a suspension $G$-spectrum is a wedge sum of the $W_GH$-Borel construction on $H$-fixed point spectra for conjugacy classes of subgroups $H$. In the realm of rational equivariant stable homotopy theory, the so-called Greenlees-May splitting \cite[A.1]{GM95} states that: The rational $G$-spectra for a finite group $G$ is equivalent to the wedge sum of Eilenberg-MacLane spectra on its equivariant homotopy groups. In this paper, we encounter a different sort of rational splitting for complex projective spaces and Grassmannians.  

In the non-equivariant setting, it is well-known that stably, the complex projective space $\C P^n$ splits rationally into a wedge of spheres 
$$\Sigma^\infty\C P^n_+\simeq_\Q\bigvee_{i=0}^{n}S^{2i}.$$ 
This follows from the fact that its rational homology is concentrated in even degrees, a property that forces the connecting maps in the associated cofiber sequences to be zero. 

Now we wish to examine this result in the equivariant setting. Given a complex representation $V$ of a finite group $G$, one defines the equivariant complex projective space $\C P(V)$ to be the space of complex lines in $V$, for which its underlying space is $\C P^{\text{dim}_\C (V)-1}$. More generally one defines the equivariant Grassmannians $Gr_n(V)$ to be the set of all $n$-dimensional complex linear subspaces $V$, so that $Gr_1(V)=\C P(V)$. The main purpose of the present paper is to explicitly write down rational stable splittings of $\C P(V)$ and $Gr_n(V)$, which come up while building these spaces as cell complexes. 

We now describe the results in this paper. The simplest case is when the complex representation $V$ is a direct sum of one dimensional representations. This is always the case when $G$ is an Abelian group. 
We write (where $\phi_i$ are one-dimensional) 
$$V= \sum_{i=1}^{n}\phi_i, \quad \omega_i= (\phi_1 + \cdots + \phi_i)\otimes \phi_{i+1}^{-1},$$
and for a Schubert symbol $\sigma = (\sigma_1, \cdots, \sigma_n)$, 
$$ W_\sigma= \sum_{i=1}^n W_{\sigma_i}, \quad W_{\sigma_i}= (\sum_{j=1}^{\sigma_i-1} \phi_j - \sum_{l=1}^{i-1} \phi_{\sigma_l})\otimes \phi_{\sigma_i}^{-1}.$$ 
In terms of this notation, we have (Theorem \ref{splitting CP(V)} and Theorem \ref{thm:split Gr_n(V)})

\begin{theorema} Suppose $V$ is a sum of one-dimensional representations denoted as above. Then, there are equivalences 
$$\Sigma^\infty\C P(V)_+\simeq_\Q \bigvee_{i=0}^{n-1}S^{\omega_i},$$
$$ \Sigma^\infty Gr_n(V)_+ \simeq_\Q \bigvee_{\substack{\mbox{\tiny Schubert}\\ \mbox{\tiny symbols } \sigma}} S^{W_\sigma} .$$
\end{theorema}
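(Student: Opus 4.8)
The plan is to construct both splittings by induction up an equivariant Schubert filtration, using that each associated graded piece is a representation sphere on a genuine \emph{complex} $G$-representation, which forces the connecting maps of the resulting cofiber sequences to be rationally null.

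First I would set down the filtration. The given decomposition of $V$ into one-dimensional summands determines a $G$-invariant complete flag $0=V_0\subset V_1\subset\cdots\subset V$ with $V_k=\phi_1\oplus\cdots\oplus\phi_k$. For $Gr_n(V)$, let $F_m$ be the union of the closed Schubert varieties $\overline{X^\circ_\sigma}$ (relative to this flag) with $d(\sigma):=\sum_i(\sigma_i-i)\le m$; for $\C P(V)=Gr_1(V)$ this is just the subflag $\C P(V_1)\subset\C P(V_2)\subset\cdots$. Since $G$ fixes the reference flag it preserves each open Schubert cell $X^\circ_\sigma$ setwise, and the usual row-echelon normal form for a point of $X^\circ_\sigma$ identifies $X^\circ_\sigma$, $G$-equivariantly and linearly, with the representation $W_\sigma=\bigoplus_i W_{\sigma_i}$ of the statement — the free matrix entry in row $i$ and column $j$ transforms with weight $\phi_j\otimes\phi_{\sigma_i}^{-1}$ — and in the projective case $X^\circ_{(k+1)}\cong_G\omega_k$. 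From the closure relation $\overline{X^\circ_\sigma}=\bigsqcup_{\tau\le\sigma}X^\circ_\tau$, with $d(\tau)<d(\sigma)$ when $\tau<\sigma$, one sees that each $F_m$ is a closed $G$-invariant subspace and that $F_m\setminus F_{m-1}$ is the disjoint union of the cells $X^\circ_\sigma$ with $d(\sigma)=m$, open in the compact Hausdorff space $F_m$. One-point compactification then yields a $G$-homeomorphism $F_m/F_{m-1}\cong\bigvee_{d(\sigma)=m}S^{W_\sigma}$, hence cofiber sequences of $G$-spectra
$$\Sigma^\infty(F_{m-1})_+\longrightarrow\Sigma^\infty(F_m)_+\longrightarrow\bigvee_{d(\sigma)=m}S^{W_\sigma}\overset{\partial}{\longrightarrow}\Sigma\Sigma^\infty(F_{m-1})_+.$$

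It remains to check $\partial\simeq_\Q 0$. Assuming inductively $\Sigma^\infty(F_{m-1})_+\simeq_\Q\bigvee_{d(\tau)\le m-1}S^{W_\tau}$, and using that the finite source of $\partial$ is compact, the components of $\partial_\Q$ lie in groups $[S^{W_\sigma},\Sigma S^{W_\tau}]^G_\Q\cong\widetilde\pi^G_0(S^{W_\tau\oplus\R-W_\sigma})_\Q$, the rational $G$-equivariant stable homotopy of a virtual representation sphere whose $H$-fixed-point dimension equals $\dim_\R W_\tau^H+1-\dim_\R W_\sigma^H$ for each $H\le G$. Each $W_{\sigma_i}$, and hence each $W_\sigma$, is an honest complex $G$-representation — the relevant difference of weights has non-negative multiplicities because $\sigma$ is strictly increasing — so all these fixed-point dimensions are even and the displayed integer is odd, in particular nonzero, for every $H$. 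Now I would invoke the algebraic model for rational $G$-spectra of Greenlees--May: a rational $G$-spectrum is recorded by its geometric fixed-point spectra $\Phi^H$ with their residual $W_GH$-actions, and since $\Phi^H S^U$ is rationally $\Q$ concentrated in degree $\dim U^H$ with trivial action (the orientation character of a complex virtual representation being trivial, and $\Phi^H\mathbb S$ being rationally $\Q$ in degree $0$), semisimplicity of $\Q[W_GH]$ gives $\widetilde\pi^G_0(S^U)_\Q\cong\bigoplus_{(H):\,\dim U^H=0}\Q$. Thus $\widetilde\pi^G_0(S^U)_\Q=0$ whenever $\dim U^H\ne 0$ for all $H$, so $\partial\simeq_\Q 0$, the cofiber sequence splits rationally, $\Sigma^\infty(F_m)_+\simeq_\Q\bigvee_{d(\tau)\le m}S^{W_\tau}$, and taking $m$ maximal (with $n=1$ giving $\C P(V)$) yields Theorem~A.

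I expect the main obstacle to be the first step: carrying out the equivariant Schubert calculus carefully enough to identify each cell $X^\circ_\sigma$ with precisely the representation $W_\sigma$, and to confirm that the Schubert filtration is really a filtration of $Gr_n(V)$ by closed $G$-invariant subspaces with the asserted wedge-of-representation-sphere subquotients. Once that geometry is secured, the homotopy-theoretic input is the short, essentially standard rational-vanishing computation above, and the remainder is a formal induction over the filtration.
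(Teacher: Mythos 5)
Your proposal is correct and follows essentially the same route as the paper: filter $\C P(V)$ and $Gr_n(V)$ by (Schubert) cells whose subquotients are wedges of representation spheres $S^{W_\sigma}$ on genuine complex representations, then kill the connecting maps rationally via the algebraic model for rational $G$-spectra, since the $H$-fixed-point dimensions of source and target spheres have opposite parity for every $H\leq G$. The only cosmetic difference is that you treat $\C P(V)$ as the $n=1$ case of the Schubert filtration, whereas the paper first runs the one-cell-at-a-time flag filtration for $\C P(V)$ and then the Schubert filtration for $Gr_n(V)$; the inductive vanishing argument is identical.
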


The proof of Theorem A involves writing $\C P(V)$ and $Gr_n(V)$ as equivariant cell complexes, and noting that the attaching maps are rationally trivial. Of course, the expressions rely on the order in which we write the one-dimensional representations $\phi_i$.

For general representations $V$, express $V$ as a direct sum of irreducible representations: $V= \sum \psi_i$. The cofiber of $\C P(V) \to \C P(V+\psi)$ may be identified as a Thom space \cite{Ati61}. 
We show that the resulting filtration induces a rational stable splitting of the equivariant projective space $\C P(V)$ into wedges of Thom spaces. (Theorem \ref{thm:split thom space})
\begin{theoremb} Let 
\[ V= \sum_{i=1}^n \psi_i,\quad V_i = \sum_{j\leq i} \psi_j,\]
where $\psi_j$ are irreducible representations. There is an equivalence 
$$\Sigma^\infty\C P(V)_+\simeq_\Q \bigvee_{i=0}^{n-1}\text{Th}(V_i\otimes\gamma_{\psi_{i+1}}),$$
where $\gamma_{\psi_j}$ is the canonical line bundle over $\C P(\psi_j)$.
\end{theoremb}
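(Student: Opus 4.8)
The plan is to realise $\C P(V)$ through the flag $0 = V_0 \subset V_1 \subset \cdots \subset V_n = V$ and to split the associated filtration of $\Sigma^\infty\C P(V)_+$ one stage at a time. Writing $F_i = \Sigma^\infty\C P(V_i)_+$, the inclusions $\C P(V_i)\hookrightarrow\C P(V_{i+1})$ are closed $G$-cofibrations, and by the cofibre identification of \cite{Ati61} — the complement of $\C P(V_i)$ inside $\C P(V_{i+1})=\C P(V_i\oplus\psi_{i+1})$ is the total space of $\Hom(\gamma_{\psi_{i+1}},\underline{V_i})$ over the compact base $\C P(\psi_{i+1})$ — one obtains $G$-cofibre sequences
$$F_i \longrightarrow F_{i+1} \longrightarrow \Sigma^\infty\mathrm{Th}(V_i\otimes\gamma_{\psi_{i+1}}) \xrightarrow{\partial_i} \Sigma F_i \qquad (0\le i\le n-1),$$
where for $i=0$ the Thom space of the zero bundle recovers $F_1=\Sigma^\infty\C P(\psi_1)_+$. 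If each $\partial_i$ is rationally null, then $F_{i+1}\simeq_\Q F_i\vee\Sigma^\infty\mathrm{Th}(V_i\otimes\gamma_{\psi_{i+1}})$, and induction on $i$ gives $\Sigma^\infty\C P(V)_+=F_n\simeq_\Q\bigvee_{i=0}^{n-1}\Sigma^\infty\mathrm{Th}(V_i\otimes\gamma_{\psi_{i+1}})$, which is the assertion.

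Everything therefore reduces to the vanishing of the maps $\partial_i$, which I would deduce from the following parity statement: for every finite group $G$ and every $V$ as above, the rational homotopy Mackey functors $\underline{\pi}_k(\Sigma^\infty\C P(V)_+)$ and $\underline{\pi}_k(\Sigma^\infty\mathrm{Th}(V_i\otimes\gamma_{\psi_{i+1}}))$ vanish for all odd $k$; call such a rational $G$-spectrum \emph{even}, and its suspension \emph{odd}. By the Greenlees--May splitting \cite{GM95}, every rational $G$-spectrum $X$ is equivalent to $\bigvee_k\Sigma^k H\underline{\pi}_k(X)$, and since the abelian category of rational $G$-Mackey functors is semisimple there are no extension terms, so $[X,W]\cong\prod_k\Hom(\underline{\pi}_kX,\underline{\pi}_kW)$ for any $W$. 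Consequently any map from an even rational $G$-spectrum to an odd one is zero; applied to $\partial_i\colon\Sigma^\infty\mathrm{Th}(V_i\otimes\gamma_{\psi_{i+1}})\to\Sigma F_i$ (with $F_i$ even, hence $\Sigma F_i$ odd), this yields $\partial_i\simeq_\Q 0$.

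To establish the parity statement, fix $K\le G$ and write $V|_K=\bigoplus_\chi V_\chi$ for the decomposition into $K$-isotypic summands, $V_\chi\cong\chi^{\oplus m_\chi(V)}$. A $K$-fixed line of $V$ has a well-defined character, so $\C P(V)^K=\coprod_\chi\C P(V_\chi)$; as scalars act trivially on a projective space, each $\C P(V_\chi)$ is an ordinary $\C P^{\,m_\chi(V)-1}$ with trivial $K$-action, hence has rational homology concentrated in even degrees. Since taking $K$-fixed points commutes with collapsing the closed $G$-subcomplex $\C P(V_i)$, we obtain $\mathrm{Th}(V_i\otimes\gamma_{\psi_{i+1}})^K=\bigvee_\chi\bigl(\C P^{\,m_\chi(V_{i+1})-1}/\C P^{\,m_\chi(V_i)-1}\bigr)$, whose reduced rational homology lies in the even degrees $2m_\chi(V_i),\dots,2m_\chi(V_{i+1})-2$. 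Finally, by the tom Dieck splitting \cite{tD75} the group $\pi_k^K$ of these suspension $G$-spectra is, after rationalisation, assembled from (co)invariants of the groups $\widetilde H_k\bigl((-)^{K'};\Q\bigr)$ over subgroups $K'\le K$, so it also vanishes for odd $k$; hence the homotopy Mackey functors are even, as needed.

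The crux — and the step I expect to require the most care — is precisely this equivariant parity statement: knowing that the underlying nonequivariant space is rationally even does not suffice, and one must control all fixed-point spaces simultaneously, keep track of the multiplicity functions $\chi\mapsto m_\chi(V_i)$, and pass correctly from fixed-point homology to homotopy Mackey functors through the tom Dieck / Greenlees--May machinery. One also invokes the equivariant form of Atiyah's cofibre identification, which I take as given from \cite{Ati61}. (Alternatively, on $K$-fixed points the filtration of $\C P(V)^K$ is a disjoint union of the filtrations treated in Theorem A applied with trivial group action, which already splits rationally; but verifying evenness directly, as above, sidesteps the residual Weyl-group bookkeeping.)
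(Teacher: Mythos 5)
Your proposal is correct and follows essentially the same route as the paper: the same flag filtration with Thom-space cofibres, and the same parity argument that the connecting maps vanish because the fixed-point spaces of source and target have rational homology concentrated in even, respectively odd, degrees (the content of Proposition \ref{prop:fixed-pt CP(V)}, Proposition \ref{prop:fixed pt thom space} and Corollary \ref{cor:homology thom space}). The only cosmetic difference is that you pass from fixed-point homology to the vanishing of maps via the tom Dieck and Greenlees--May splittings on genuine fixed points, whereas the paper invokes the algebraic model of rational $G$-spectra through geometric fixed points \eqref{eq:id alg model}; both rest on the same semisimplicity of the rational equivariant stable category.
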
 

We carry this argument forward to the case of $Gr_n(V)$. In this case we demonstrate what happens in a single step where $V_0 \subset V$ and $V/V_0 = V'$. Rationally, this splits as a wedge of lower Grassmannians on $V_0$ smashed with Thom spaces of lower Grassmannians on $V'$. (Theorem \ref{thm:general split gr}) 
\begin{theoremc}
 There is a stable homotopy equivalence
    $$\Sigma^\infty Gr_n(V)_+\simeq_\Q\bigvee_{k=1}^{n}\text{Th}(Gr_k(V'), V_0\otimes\xi_k)\land Gr_{n-k}(V_0)_+\bigvee Gr_n(V)_{V_0 +}.$$
\end{theoremc}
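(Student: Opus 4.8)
The plan is to realise $Gr_n(V)$ as a $G$-space carrying a short $G$-invariant filtration whose successive cofibres are precisely the Thom spaces on the right, and then, exactly as in the proofs of Theorems~A and~B, to show that after rationalisation every connecting map vanishes, so that the filtration splits.

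First I would fix a $G$-equivariant splitting $V\cong V_0\oplus V'$ (available since $\C G$ is semisimple), with projection $p\colon V\to V'$, and filter $Gr_n(V)$ by the closed $G$-invariant subspaces
$$F_k=\{\,W\in Gr_n(V)\ :\ \dim_\C p(W)\le k\,\}=\{\,W\ :\ \dim_\C(W\cap V_0)\ge n-k\,\},\qquad 0\le k\le n,$$
so that $F_0=Gr_n(V_0)$ is the summand written $Gr_n(V)_{V_0}$, and $F_n=Gr_n(V)$. The open stratum $\Sigma_k=F_k\setminus F_{k-1}$ consists of the $W$ with $\dim_\C p(W)=k$ exactly, and $W\mapsto(p(W),W\cap V_0)$ exhibits $\Sigma_k$ as the total space of a $G$-equivariant complex vector bundle over $Gr_k(V')\times Gr_{n-k}(V_0)$, with fibre over $(U',Z)$ the space $\Hom(U',V_0/Z)$ parametrising the complements of $Z$ in $W$. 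The key geometric step is then to prove that $(F_k,F_{k-1})$ is $G$-equivariantly the Thom pair of this bundle — that $\Sigma_k$ limits onto $F_{k-1}$ exactly along the fibrewise directions at infinity. This is the Grassmannian analogue of Atiyah's identification of the cofibre of $\C P(U)\hookrightarrow\C P(U\oplus W)$ as a Thom space \cite{Ati61}, and it yields $F_k/F_{k-1}\cong\text{Th}(Gr_k(V'),V_0\otimes\xi_k)\land Gr_{n-k}(V_0)_+$ once the above bundle is identified with $V_0\otimes\xi_k$.

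Assembling the cofibre sequences $\Sigma^\infty F_{k-1,+}\to\Sigma^\infty F_{k,+}\to F_k/F_{k-1}$ (with $F_{-1}=\emptyset$) reduces the theorem to showing that the connecting maps $F_k/F_{k-1}\to\Sigma\,\Sigma^\infty F_{k-1,+}$ become null after rationalisation. For this I would check that the rational homology of every $F_k$ and every $F_k/F_{k-1}$ is concentrated in even degrees, both underlying and equivariantly: for each $H\le G$ the fixed set $Gr_n(V)^H$ is a disjoint union of products of ordinary Grassmannians, indexed by the isotypic decomposition of $V^H$, hence has homology in even degrees, and the fixed sets of the complex Thom spaces $F_k/F_{k-1}$ have even homology by the Thom isomorphism. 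A connecting map from an even object into a $1$-fold suspension of an even object induces zero on homotopy Mackey functors, and since by Greenlees--May a rational $G$-spectrum is detected by (and is formal with respect to) its homotopy Mackey functors, the connecting maps are rationally null. Thus the filtration splits rationally and $\Sigma^\infty Gr_n(V)_+\simeq_\Q\bigvee_{k=0}^{n}F_k/F_{k-1}$, which is the claimed wedge.

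The main obstacle I expect is the geometric identification: producing the $G$-equivariant Thom-pair structure on $(F_k,F_{k-1})$ and matching the normal bundle with $V_0\otimes\xi_k$. Unlike the projective case, where Atiyah's computation applies directly, here one must handle the Grassmannian-bundle structure of the strata and verify that the compactification is fibrewise at infinity in a genuinely $G$-equivariant — not merely underlying — way. A secondary point requiring care is promoting the evenness-and-vanishing argument from underlying spaces to the level of homotopy Mackey functors, which is exactly where the Greenlees--May description of rational $G$-spectra enters.
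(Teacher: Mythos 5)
Your proposal follows essentially the same route as the paper: the same filtration of $Gr_n(V)$ by the dimension of $W\cap V_0$ (you index strata by $\dim p(W)$, the paper by $\dim(W\cap V_0)$), the same identification of successive cofibres as Thom spaces over $Gr_k(V')\times Gr_{n-k}(V_0)$ via $W\mapsto(p(W),\,W\cap V_0)$, and the same vanishing argument for the connecting maps via even-degree rational homology of all $H$-fixed points combined with the algebraic model of rational $G$-spectra. Your description of the stratum's fibre as $\Hom(U',V_0/Z)$ is if anything a more careful account of the normal bundle than the paper's, which presents it as $V_0\otimes\xi$ pulled back from the $Gr(V')$ factor alone.
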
 
One may iterate the equivalence in this theorem to further reduce the expression to Grassmannians over irreducible representations. 

\subsection{Organization}
In \S \ref{prelim}, we recall some preliminaries that are used throughout the paper.  The \S \ref{sec:cp(v)} deals with the case of Abelian groups proving Theorem A, and \S \ref{sec:cp(v)gen} deals with the proof of Theorem B. Finally we prove Theorem C in \S \ref{sec:gr_n(v)}. 

\subsection{Notation} Throughout this paper, we use the following notations. 
\begin{itemize}
\item $G$ denotes a finite group. 
\item It is assumed that the underlying category is the $G$-equivariant rational stable homotopy category. 
\item $W_GH=N_GH/H$ denotes the Weyl group of a subgroup $H$.
\item The notation $\C P(V)$ is the equivariant complex projective space of lines in $V$.
\item The notation $Gr_n(V)$ stands for the Grassmannian of $n$-planes in $V$. 
\item For a $G$-bundle $E$, let $\text{Th}(E)$ be its Thom space. 
\item  $\mathbb{T}$ stands for the circle group $S^1$, and its representation corresponding to the action of $\mathbb{T}$ on $\C$ by multiplication is denoted as $\lambda$.
\end{itemize}

%

\noindent
{\bf Acknowledgments.}
The second author was supported by the Indian Council for Cultural Relations (ICCR), Research Grant (No.PHN/ICCR/321/1/2021), India.

\section{Preliminaries}\label{prelim}
In this section we review some standard constructions to be used later. Throughout the paper, we restrict our attention to finite groups $G$.  
\subsection{Equivariant projective spaces}
Let $G$ be a finite group and $V$ a representation of $G$ endowed with a $G$-invariant inner product. We write $D(V)$ for the unit disk in $V$ and $S(V)$ for the unit sphere of $V$, respectively expressed as, 
$$D(V)=\{v\in V~|~ \langle v,v\rangle\leq 1\} \text{ and } S(V)=\{v\in V~|~ \langle v,v\rangle=1\}.$$
Denote by $S^V$ the one-point compactification of $V$ with basepoint at $\infty$, and observe that $S^V\cong D(V)/S(V)$.

Write $\C P(V)$ for the equivariant complex projective space with linear $G$-action associated to a representation $V$. More precisely we make the following definition. 
\begin{defn}\label{def:CP(V)}
    For a complex representation $V$ of a finite group $G$, let $\C P(V)$ be the set of one-dimensional complex linear subspaces of $V$.
\end{defn}
We can topologize $\C P(V)$ as a quotient $G$-space
$$\C P(V)\cong(V\smallsetminus\{0\})/\C^*$$
where $\C^*=\C\smallsetminus\{0\}$ acting on $V\smallsetminus\{0\}$ by scalar multiplication. If we write $\T$ for the circle group and $\lambda$ for the one-dimensional representation of $\T$, then Definition \ref{def:CP(V)} can be rewritten as follows \cite[\S2]{CGK00}. 

 \begin{prop}\label{def2:CP(V)}
    The space $\C P(V)$ is $G$-homeomorphic to the quotient $G$-space
    $$\C P(V)\cong S(V\otimes\lambda)/\T$$
    where $\lambda$ is the one-dimensional representation of $\T$, given as $\T$ acting on $\C$ by multiplication.
 \end{prop}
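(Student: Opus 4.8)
The plan is to factor the standard quotient presentation $\C P(V)\cong(V\smallsetminus\{0\})/\C^*$ through the unit sphere, using the polar decomposition $\C^*\cong\T\times\R_{>0}$.

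First I would unwind the representation $V\otimes\lambda$. As a representation of $G\times\T$ its underlying complex vector space is $V$, with $G$ acting through its given action on $V$ (trivially on the $\lambda$-factor) and $\T$ acting by multiplication by unit complex scalars (trivially on $V$). Since the chosen inner product on $V$ is $G$-invariant and the $\T$-action is by unit scalars, hence by isometries, the unit sphere $S(V\otimes\lambda)$ is identified $G\times\T$-equivariantly with $S(V)=\{v\in V:\langle v,v\rangle=1\}$, on which $\T$ acts by scalar multiplication.

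Next, $G$-invariance of the inner product means the norm $|v|=\sqrt{\langle v,v\rangle}$ is a $G$-invariant function, so $v\mapsto(v/|v|,|v|)$ defines a $G$-equivariant homeomorphism $V\smallsetminus\{0\}\xrightarrow{\cong}S(V)\times\R_{>0}$ with $G$ acting trivially on the second factor. Under $\C^*\cong\T\times\R_{>0}$ this homeomorphism carries the scalar $\C^*$-action on the left to the product action $(\zeta,r)\cdot(s,t)=(\zeta s,rt)$ on the right, and it commutes with $G$. Each $\T\times\R_{>0}$-orbit on the right has the form $(\T s)\times\R_{>0}$, so passing to $\C^*$-quotients gives $G$-homeomorphisms
$$\C P(V)\cong(V\smallsetminus\{0\})/\C^*\cong\big(S(V)\times\R_{>0}\big)/\big(\T\times\R_{>0}\big)\cong S(V)/\T=S(V\otimes\lambda)/\T,$$
which is the assertion. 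Continuity of these maps and their inverses is routine: the normalization $v\mapsto v/|v|$ is continuous on $V\smallsetminus\{0\}$, and each quotient map is open.

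There is no genuine obstacle here; this is essentially the statement recalled from \cite[\S2]{CGK00}. The only points that merit care are the bookkeeping identification of $V\otimes\lambda$ over $G\times\T$ with $V$ equipped with the scalar $\T$-action, and the observation that $G$-invariance of the inner product is precisely what makes the radial coordinate $|v|$ a $G$-invariant function, ensuring that the polar splitting $V\smallsetminus\{0\}\cong S(V)\times\R_{>0}$ is $G$-equivariant.
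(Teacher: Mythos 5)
Your proof is correct. It takes a mildly different route from the paper: the paper defines the map $q\colon S(V\otimes\lambda)\to\C P(V)$, $v\mapsto\operatorname{span}\{v\}$, directly, notes that it is $\T$-invariant, and asserts that the induced map $\tilde q\colon S(V\otimes\lambda)/\T\to\C P(V)$ is the desired $G$-homeomorphism. You instead start from the presentation $\C P(V)\cong(V\smallsetminus\{0\})/\C^*$ already recorded in the paper, factor $\C^*\cong\T\times\R_{>0}$, and use the $G$-equivariant polar decomposition $V\smallsetminus\{0\}\cong S(V)\times\R_{>0}$ to cancel the $\R_{>0}$ factor. This has the small advantage that the homeomorphism comes for free from known quotient identifications rather than from a separate check that $\tilde q$ is open (or an appeal to compactness), and it makes explicit where $G$-invariance of the inner product is used, namely to make the radial projection $v\mapsto v/|v|$ a $G$-map. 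Your preliminary identification of $S(V\otimes\lambda)$ with $S(V)$ as a $G\times\T$-space is the same bookkeeping the paper performs implicitly when it writes elements of $S(V\otimes\lambda)$ as $(v,z)$ and sends them to $\operatorname{span}\{v\}$. No gap.
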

Following a commutative triangle
\[\begin{tikzcd}
	{S(V\otimes\lambda)} & {\mathbb{C}P(V)} \\
	{S(V\otimes\lambda)/\mathbb{T}}
	\arrow["q", from=1-1, to=1-2]
	\arrow["f"', from=1-1, to=2-1]
	\arrow["{\tilde{q}}"', from=2-1, to=1-2]
\end{tikzcd}\]
such that map $q: S(V\otimes\lambda)\to\C P(V)$, is described as $(v, z)\mapsto \text{span}\{v\}$, it is evident that $\C P(V)\cong S(V\otimes\lambda)/\T$.

\subsection{Equivariant cell structure of $\C P(V)$}
We recall an equivariant cell structure of $\C P(V)$  constructed in \cite[\S3]{Lew88}. The usual CW complex structure on projective spaces yields the equivariant cell structure where cells are of the form $D(W)$ attached along maps out of $S(W)$, in the case $V$ is a direct sum of one-dimensional representations. 
 
  We have an explicit description of the $G$-cell structure of $\C P(V)$ as follows. Write $V=V'\oplus\phi$ for one-dimensional $\phi$, we have a cofiber sequence
 \begin{myeq}\label{eq:cof v, v'}
     \C P(V')_+\to\C P(V)_+\to S^{V'\otimes\phi^{-1}}.
 \end{myeq}The $G$-cells are of the form $D(V'\otimes\phi^{-1})$. Now suppose $V$ is a representation with $\text{dim}V=n$, and write
 \begin{myeq} \label{sum of irred}
     V=\sum_{i=1}^n\phi_i \text{ and } V_i=\sum_{j=1}^i\phi_j
 \end{myeq}for one-dimensional representations $\phi_i$. 
 
 Following \cite[\S3]{CGK00} we choose the complete flag of $V_i$ with respect to the above $V_i$ \eqref{sum of irred}
 $$0=V_0\subset V_1\subset V_2\subset \cdots\subset V_n=V.$$
Associated with this flag we obtain a complete cellular filtration of $\C P(V)$
\begin{myeq} \label{eq:filter cp(v)}
  \emptyset\subset\C P(V_1)\subset\C P(V_2)\subset\cdots\subset\C P(V_n)=\C P(V)  
\end{myeq}with a cofiber sequence \eqref{eq:cof v, v'}
\begin{myeq} \label{cof seq}
  \C P(V_i)_+\to\C P(V_{i+1})_+\to S^{W_i}  
\end{myeq} where $W_i =V_i\otimes\phi_{i+1}^{-1}$, and $\phi_i$ are one-dimensional representations.

Notice that for one-dimensional $\phi$, the space $\C P(\phi)$ is just a point, and we have $$\C P(V)\cong\C P(V\otimes\phi).$$

\subsection{Thom complexes and the filtration of $\C P(V)$}\label{subsec:cof thom space}
If $G$ is Abelian, every complex representation is a direct sum of one-dimensional representations. This is not true for general finite groups. In this case we obtain a filtration where cofibers are described using Thom spaces. 

Let $G$ be a finite group and $V$ a representation of $G$. Write $V=V'+\psi$ for an irreducible representation $\psi$. From Definition \ref{def2:CP(V)} we obtain a cofiber sequence via the mapping cone
 \begin{myeq}\label{eq2:cof v, v'}
     S(V'\otimes\lambda)/\T\xhookrightarrow{i}S(V\otimes\lambda)/\T\to C(i)
 \end{myeq}where $C(i)$ is the subquotient $S(V\otimes\lambda)/\T$ by $S(V'\otimes\lambda)/\T$.

 From \cite{Ati61} we have that $\C P(V)\setminus \C P(V')$ is a vector bundle over $\C P(\psi)$. More formally
 $$\C P(V)\setminus\C P(V')\cong D(V'\otimes\gamma_\psi\to\C P(\psi))$$
 where $\gamma_\psi$ is the canonical line bundle over $\C P(\psi)$ described as 
 $$S(\psi)\times_\T\C\xrightarrow[]{\gamma_\psi}\C P(\psi), (v,z)\mapsto\text{span}\{v\}.$$
We write this Thom space as $\text{Th}(V'\otimes\gamma_\psi)$. Then the cofiber sequence \eqref{eq2:cof v, v'} simplifies to
 \begin{myeq}\label{eq:thom cof seq}
     \C P(V')_+\to\C P(V)_+\to \text{Th}(V'\otimes\gamma_\psi).
 \end{myeq}

\subsection{Equivariant Grassmannians}
Let $G$ be a finite group and $V$ an $m$-dimensional complex $G$-representation endowed with a $G$-invariant inner product. For $n\leq m$ we write $Gr_n(V)$ for the complex Grassmannian with the linear $G$-action associated to  $V$.
\begin{defn}\label{def:gr_n(v)}
    Let $Gr_n(V)$ be the set of all $n$-dimensional complex linear subspaces $V$, topologized as usual as the quotient of the Stiefel manifold on $n$ orthogonal vectors in $V$.
\end{defn}

Notice that $Gr_1(V)\cong \C P(V)$ is the equivariant complex projective space.

\subsection{Equivariant Schubert cell structure of $\text{Gr}_n(V)$} 
If $G$ is an Abelian group, then $V$ is a direct sum of one-dimensional representations, and the usual filtration of the $Gr_n(V)$ by Schubert cells yields an equivariant cell structure. We recall this below.

Assume that $V$ is a representation such that $\text{dim}V=m$. Following \cite[\S6]{MS74} we choose a complete flag
 \begin{myeq}\label{eq:filt v_i for gr_n(v)}
     0=V_0\subset V_1\subset V_2\subset \cdots\subset V_m=V.
 \end{myeq}and write $\phi_i = V_i/V_{i-1}$. Recall a Schubert symbol $\sigma=(\sigma_1,\sigma_2,\cdots,\sigma_n)$ where $\sigma_i$ are integers and satisfy
 \begin{myeq}\label{eq: schubert index}
    1\leq\sigma_1<\sigma_2<\cdots<\sigma_n\leq m.
 \end{myeq}Rewriting \eqref{eq:filt v_i for gr_n(v)} by using \eqref{eq: schubert index}
 \begin{myeq}\label{eq:filt v sigma gr}
     0\subset V_{\sigma_1}\subset V_{\sigma_2}\subset \cdots \subset V_{\sigma_n}
 \end{myeq}so that $V_{\sigma_i}=\phi_{\sigma_1}+\phi_{\sigma_2}+\cdots+\phi_{\sigma_i}$.
 
 Following \eqref{eq:filt v sigma gr} we write a non-equivariant Schubert cell
 \begin{myeq}\label{eq:schubert cell}
    e(\sigma)=\{W\in Gr_n(V) ~|~ \text{dim}(W\cap V_{\sigma_i})=i \text{ and } \text{dim}(W\cap V_{\sigma_i-1})=i-1\}. 
 \end{myeq}Such an $n$-plane $W$ consists of a basis $z_1, z_2,\cdots, z_n$ where $z_i\in V_{\sigma_i}$ and non-zero in $V_{\sigma_i}/V_{\sigma_{i}-1}$. In terms of matrices this $n$-plane $W$ is represented as an $m\times n$-matrix with rows $z_1, z_2,\cdots, z_n$, and columns indexed by the modified sequence $(\phi_1, \phi_2,\cdots, \phi_m)$. Dividing every $z_i$ by its $\phi_{\sigma_i}$ coordinate, we may assume the last entry in each row is positive, say equal to $1$, and all subsequent entries are zeros, by subtracting a suitable multiple of $z_i$ from the other rows, as we may assume the matrix is in row-reduced echelon form.

As noted in \cite[\S3]{CGK02}, write $V$ as in \eqref{sum of irred} so that the above description gives an equivariant cell structure on $Gr_n(V)$. Indeed since all $V_i$ are $G$-invariant, the cells $V(\sigma)$ are $G$-subspaces. As a $G$-space, this equivariant Schubert cell $V(\sigma)$ is a disk homeomorphic to a product of $(V_{\sigma_i-1}- \sum_{j=1}^{i-1} \phi_{\sigma_j})\otimes\phi_{\sigma_i}^{-1}$. More explicitly using \eqref{eq: schubert index} and rewriting \eqref{eq:schubert cell} for $V(\sigma)$
\begin{myeq}\label{eq:equiv schubert cell}
    V(\sigma)=\{W\in Gr_n(V) ~|~ \text{dim}(W\cap V_{\sigma_i})=i \text{ and } \text{dim}(W\cap V_{\sigma_i-1})=i-1\},
\end{myeq}the equivariant Schubert cells of $Gr_n(V)$ are
\begin{myeq}\label{eq: schubert cell}
    V(\sigma)\cong D(W_\sigma), W_\sigma=\bigoplus_{i=1}^{n}W_{\sigma_i}
\end{myeq}
such that 
$$W_{\sigma_i}=(V_{\sigma_i-1}- \sum_{j=1}^{i-1} \phi_{\sigma_j})\otimes\phi_{\sigma_i}^{-1}.$$
 As noticed in \cite[6.3]{MS74} this cell $V(\sigma)$ is essentially an open cell of dimension 
\begin{myeq}\label{eq:dim sigma}
    |\sigma|=\sum_{i=1}^{n}\sigma_i-i.
\end{myeq}

\subsection{Rational stable homotopy}\label{subsec:rsh}
Let $G$ be a finite group. The central technique we use throughout our article is grounded in a complete algebraic model of rational $G$-spectra in \cite[6.2]{G06}: An equivalence of homotopy categories
$$\Spec_\Q^G\simeq\prod_{H\leq G}D(\Q[W_GH])$$
as triangulated categories. From this algebraic model we have the identification \cite[\S2]{GQ23}
\begin{myeq}\label{eq:id alg model}
    [X,Y]^G\xrightarrow{\cong}\prod_n\prod_{H\leq G}\text{Hom}_{\Q[W_GH]}(\pi_n(\Phi^H(X)),\pi_n(\Phi^H(Y))).
\end{myeq}

\section{Rational stable homotopy type of $Gr_n(V)$ over Abelian groups}\label{sec:cp(v)}
For Abelian groups $G$, every complex representation is a direct sum of one-dimensional representations. Following this, we write a complex representation $V$ of dimension $n$ as 
    $$V=\sum_{i=1}^{n}\phi_i$$
such that $\text{dim}\phi_i=1$. We first show that the complex projective space $\C P(V)$ rationally splits as a wedge of spheres. This expression as a wedge depends on the order in which we arrange the $\phi_i$. More precisely we fix the ordered sequence
\begin{myeq}\label{eq:ordered phi}
   (\phi_1, \phi_2,\cdots,\phi_n) 
\end{myeq} for $\phi_i$ are the elements in $V$, and rewrite \eqref{sum of irred}
\begin{myeq} \label{order one rep} 
V=\sum_{i=1}^n\phi_i  \text{ and } V_i=\sum_{j=1}^{i}\phi_i.
\end{myeq}We have the proper cellular filtration of $\C P(V)$ \eqref{eq:filter cp(v)}
$$\emptyset =\C P(V_0)\subset\C P(V_1)\subset\C P(V_2)\subset\cdots\subset\C P(V_n)=\C P(V)$$
with the cofiber sequences
\begin{myeq} \label{cof seq via thom space}
    \C P(V_{i})_+\to\C P(V_{i+1})_+\to S^{\omega_i}
\end{myeq}where representations
\begin{myeq}\label{eq:rep omega cp}
    \omega_i=V_i\otimes\phi_{i+1}^{-1}
\end{myeq}for some $1\leq i\leq n-1$ and $\phi_i$ are one-dimensional.
 Notice that since $\C P(V_1)=S^0$, the expression $S^{\omega_0}= S^0$.

 We show that this cofiber sequence \eqref{cof seq via thom space} splits in $\Spec_\Q^G$ and yields a rational stable splitting of $\C P(V)$ by using the approach in \S \ref{subsec:rsh}. 
\begin{theorem} \label{splitting CP(V)}
    There is an equivalence
    $$\Sigma^\infty\C P(V)_+\simeq_\Q \bigvee_{i=0}^{n-1}S^{\omega_i}$$
    where $V$ is defined as in \eqref{order one rep} and $\omega_i$ in \eqref{eq:rep omega cp}.
\end{theorem}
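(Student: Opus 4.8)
The plan is to show that each cofiber sequence \eqref{cof seq via thom space} splits rationally, and then to assemble these splittings by induction on $n$. The key tool is the algebraic model recalled in \S\ref{subsec:rsh}: by \eqref{eq:id alg model}, a map of rational $G$-spectra $X \to Y$ is determined by its effect on the graded $\Q[W_GH]$-modules $\pi_*(\Phi^H(X))$, and a map is an equivalence precisely when it induces isomorphisms on all of these. So it suffices to understand the geometric fixed points $\Phi^H$ of the spectra appearing in \eqref{cof seq via thom space}, and then to produce a rational retraction of $\C P(V_{i+1})_+ \to S^{\omega_i}$ (equivalently, a splitting of the connecting map $S^{\omega_i} \to \Sigma \C P(V_i)_+$).

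First I would analyze $\Phi^H$ applied to the cofiber sequence. Since $\C P(V_i)_+ \to \C P(V_{i+1})_+ \to S^{\omega_i}$ is a cofiber sequence of $G$-spectra and $\Phi^H$ is exact and commutes with smashing, applying $\Phi^H$ yields a cofiber sequence of nonequivariant spectra. The key observation is that $\Phi^H(\C P(W)_+) \simeq \C P(W^H)_+$ and $\Phi^H(S^{\omega_i}) \simeq S^{(\omega_i)^H} = S^{\dim_\C(\omega_i^H) \cdot 2}$ is an even-dimensional sphere (here $\omega_i^H$ denotes the $H$-fixed subrepresentation, a complex vector space). Because the nonequivariant rational homology of each $\C P(W^H)_+$ is concentrated in even degrees, the long exact sequence in rational homology of $\Phi^H$ of \eqref{cof seq via thom space} breaks into short exact sequences, and in particular the connecting homomorphism vanishes; thus $\pi_*^\Q(\Phi^H(S^{\omega_i})) \to \pi_{*-1}^\Q(\Phi^H(\C P(V_i)_+))$ is zero for every $H$. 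I would then argue — using the freeness of $\pi_*^\Q(\Phi^H(S^{\omega_i}))$ as a $\Q$-module in a single even degree — that the surjection of graded $\Q[W_GH]$-modules $\pi_*^\Q(\Phi^H(\C P(V_{i+1})_+)) \to \pi_*^\Q(\Phi^H(S^{\omega_i}))$ admits a $\Q[W_GH]$-linear section (a one-dimensional-over-$\Q$, but possibly higher over the group ring, summand in a fixed degree splits off because we are over the semisimple ring $\Q[W_GH]$ after rationalization, or more elementarily because the sequence of $\Q[W_GH]$-modules is short exact and we may apply semisimplicity of rational group algebras). Compatibly choosing such sections in each degree and each $H$, \eqref{eq:id alg model} assembles them into an actual map $S^{\omega_i} \to \C P(V_{i+1})_+$ of rational $G$-spectra splitting the projection.

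Once each cofiber sequence \eqref{cof seq via thom space} is split rationally, I would induct: assuming $\Sigma^\infty \C P(V_i)_+ \simeq_\Q \bigvee_{j=0}^{i-1} S^{\omega_j}$, the split cofiber sequence gives $\Sigma^\infty \C P(V_{i+1})_+ \simeq_\Q \Sigma^\infty \C P(V_i)_+ \vee S^{\omega_i} \simeq_\Q \bigvee_{j=0}^{i} S^{\omega_j}$, with the base case $\C P(V_1)_+ = S^0 = S^{\omega_0}$. This yields the claimed equivalence for $V = V_n$.

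The main obstacle I anticipate is the step producing the $\Q[W_GH]$-equivariant section — one must be careful that the short exact sequences of homotopy groups are sequences of $\Q[W_GH]$-modules (not merely $\Q$-modules), which requires checking that the geometric-fixed-point maps and the Weyl-group actions are compatible; once that is in place, semisimplicity of $\Q[W_GH]$ makes the splitting automatic. A secondary point requiring care is verifying $\Phi^H(\C P(V)_+) \simeq \C P(V^H)_+$ and the analogous statement for the sphere $S^{\omega_i}$, so that the "even degrees" argument genuinely applies fixed-point-wise; this is where the hypothesis that the $\phi_i$ are one-dimensional (so the $\omega_i$ are honest complex representations and their fixed points are again complex, hence even-dimensional spheres) is used.
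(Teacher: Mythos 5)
Your approach is essentially the paper's: both reduce to showing the connecting map in the cofiber sequence \eqref{cof seq via thom space} is rationally trivial, using the algebraic model \eqref{eq:id alg model} together with the observation that the geometric fixed-point spectra on the two sides have rational homotopy concentrated in opposite parities. The paper implements the parity argument by first invoking the inductive wedge decomposition of $\C P(V_{n-1})_+$ into even representation spheres and then examining each component map; you instead argue directly from the evenness of $H_*(\Phi^H(\C P(V_i)_+);\Q)$, which is an equally valid (and arguably logically cleaner) way to run the same calculation.

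One identification in your sketch needs correcting. The geometric fixed points $\Phi^H(\Sigma^\infty\C P(W)_+)$ is the suspension spectrum of $(\C P(W)^H)_+$, the $H$-fixed subspace of the \emph{space} $\C P(W)$, which (for $W$ a sum of one-dimensional $H$-representations) is the disjoint union $\bigsqcup_j \C P^{n_j-1}$ over the isotypical components $n_j\phi_j$ of $i_H^*W$ --- see Proposition~\ref{prop:fixed-pt CP(V)}. It is \emph{not} $\C P(W^H)_+$, the projectivization of the $H$-fixed subrepresentation, which records only the trivial-character summand: a line carried to itself by $H$ need not consist of $H$-fixed vectors. Fortunately the fact you actually use --- that the rational homology of $\Phi^H(\C P(W)_+)$ is concentrated in even degrees --- survives the correction, since a disjoint union of complex projective spaces still has homology in even degrees only. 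A lesser point: the semisimplicity-of-$\Q[W_GH]$ section-building step is unnecessary. Once you know the map induced on $\pi_*\Phi^H$ by the connecting map $\delta\colon S^{\omega_i}\to\Sigma\C P(V_i)_+$ vanishes for every $H$ (which is forced by parity), \eqref{eq:id alg model} immediately gives $\delta=0$ as a map of rational $G$-spectra, and a distinguished triangle with zero connecting map splits; no $\Q[W_GH]$-linear section needs to be produced by hand.
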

\begin{proof}
We proceed by induction on $n=\text{dim}V$. Use the notation \eqref{order one rep} and consider the cofiber sequence \eqref{cof seq via thom space}
$$\C P(V_{n-1})_+\to\C P(V)_+\to S^{\omega_{n-1}}.$$
Inductively we assume that
 $$\C P(V_i)_+\simeq_\Q\C P(V_{i-1})_+\vee S^{\omega_{i-1}}.$$
It is enough to show that the connecting map in the sequence, that is,
 $$S^{\omega_{n}}\xrightarrow[]{\delta_n}\Sigma\C P(V_{n-1})_+$$
 is the zero map. 
 
 By the induction hypothesis
 $$\C P(V_{n-1})_+\simeq_\Q \C P(V_{n-2})_+\vee S^{\omega_{n-2}}\simeq_\Q\bigvee_{i=0}^{n-2}S^{\omega_i}.$$
 We then proceed for $i=n$ as follows. The map
 $$S^{\omega_{n}}\xrightarrow[]{\delta_n}\Sigma\C P(V_{n-1})_+\simeq_\Q\bigvee_{i=0}^{n-2}S^{\omega_i+1}$$ 
 up to homotopy, is a collection of $G$-maps \eqref{eq:id alg model}
 \begin{align*}
 \bigvee_{i=0}^{n-2}[S^{\omega_{n}}, S^{\omega_i+1}]^G
 &\cong\bigoplus_{i=0}^{n-2}\prod_k\prod_{H\leq G}\text{Hom}_{\Q[W_GH]}(\pi_k(\Phi^HS^{\omega_{n}}),  \pi_k(\Phi^HS^{\omega_i+1}))\\
 &\cong\bigoplus_{i=0}^{n-2}\prod_k\prod_{H\leq G}\text{Hom}_{\Q[W_GH]}(\pi_k(S^{\omega_{n}^H}),  \pi_k(S^{\omega_i^H+1}))
 \end{align*}
which gives rise to the zero element since the $H$-fixed point dimensions $|\omega_{n}^H|$ are even while $|\omega_i^H+1|$ are odd for all subgroups $H\leq G$.

Thus the short exact sequence splits
$$\C P(V)_+\simeq_\Q\C P(V_{n-1})_+\vee S^{\omega_{n-1}}.$$
This proves the claim as required.
\end{proof}

In the context of Abelian groups it is not hard to generalize this Theorem \ref{splitting CP(V)} for $Gr_n(V)$. Now suppose $V$ be a representation with $\text{dim}V=m$, and rewrite \eqref{sum of irred}
 \begin{myeq} \label{eq:sum irred gr}
     V=\sum_{i=1}^m\phi_i \text{ and } V_{i}=\sum_{j=1}^i\phi_{j}.
 \end{myeq}Recalling that with a Schubert symbool $\sigma=(\sigma_1,\sigma_2,\cdots,\sigma_n)$ in \eqref{eq: schubert index}, write \eqref{eq: schubert cell} 
\begin{myeq}\label{eq:notation disk gr}
    W_\sigma=\bigoplus_{i=1}^{n}W_{\sigma_i} \text{ and } W_{\sigma_i}=(V_{\sigma_i-1}- \sum_{j=1}^{i-1} \phi_{\sigma_j})\otimes\phi_{\sigma_i}^{-1}.
\end{myeq}

As the complex dimension of $Gr_n(V)$ equals to $d=n(\text{dim}V-n)$, using \eqref{eq: schubert cell} and \eqref{eq:dim sigma} we express
 $$Gr_n(V)=\bigcup_{i=0}^{d}Gr_n(V)^{(i)}$$
 with the property that 
 $$Gr_n(V)^{(i)}=\bigcup_{|\sigma|\leq i}V(\sigma) \text{ and } |\sigma|=\sum_{i=1}^{n}\sigma_i-i.$$
 Then the $G$-CW complex is a sequential colimit of $Gr_n(V)^{(i-1)}$ where $Gr_n(V)^{(i)}$ is a pushout
\[\begin{tikzcd}
	{\bigsqcup\limits_{|\sigma|=i}S(W_\sigma)} & {Gr_n(V)^{(i-1)}} \\
	{\bigsqcup\limits_{|\sigma|=i}D(W_\sigma)} & {Gr_n(V)^{(i)}}
	\arrow[from=1-1, to=1-2]
	\arrow[from=1-1, to=2-1]
	\arrow[from=1-2, to=2-2]
	\arrow[from=2-1, to=2-2]
\end{tikzcd}\]
Following this we have a cofiber sequence
\begin{myeq}\label{eq:cof seq ab}
    Gr_n(V)^{(i-1)}_+\to Gr_n(V)^{(i)}_+\to \bigvee_{|\sigma|=i}S^{W_\sigma}.
\end{myeq}

In the same fashion as in Theorem \ref{splitting CP(V)} we claim that this sequence \eqref{eq:cof seq ab} splits in $\Spec_\Q^G$ and makes an explicit rational stable splitting of $Gr_n(V)$. 
\begin{theorem} \label{thm:split Gr_n(V)}
    There is an equivalence
    $$\Sigma^\infty Gr_n(V)_+\simeq_\Q \bigvee_{\sigma}S^{W_\sigma}$$
    with notations $V$ in \eqref{eq:sum irred gr} and $W_\sigma$ in \eqref{eq:notation disk gr}.
\end{theorem}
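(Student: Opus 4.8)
The plan is to mimic the proof of Theorem \ref{splitting CP(V)} essentially verbatim, replacing the linear filtration $\C P(V_i)$ by the skeletal filtration $Gr_n(V)^{(i)}$ and the single sphere $S^{\omega_i}$ by the wedge $\bigvee_{|\sigma|=i} S^{W_\sigma}$. First I would set up the induction on the skeletal degree $i$, with inductive hypothesis that $\Sigma^\infty Gr_n(V)^{(i-1)}_+\simeq_\Q \bigvee_{|\sigma|\leq i-1} S^{W_\sigma}$. The base case is immediate since $Gr_n(V)^{(0)}$ is a point (the unique cell with $|\sigma|=0$, coming from $\sigma=(1,2,\dots,n)$). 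For the inductive step, it suffices to show that the connecting map in the cofiber sequence \eqref{eq:cof seq ab},
$$\bigvee_{|\sigma|=i} S^{W_\sigma}\xrightarrow{\ \delta_i\ }\Sigma\, Gr_n(V)^{(i-1)}_+\simeq_\Q \bigvee_{|\tau|\leq i-1} S^{W_\tau+1},$$
is rationally null.

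The key point, exactly as before, is a parity argument via the algebraic model \eqref{eq:id alg model}. The map $\delta_i$ is determined up to homotopy by its components in
$$[S^{W_\sigma}, S^{W_\tau+1}]^G\cong \prod_k\prod_{H\leq G}\Hom_{\Q[W_GH]}\bigl(\pi_k(S^{W_\sigma^H}),\,\pi_k(S^{W_\tau^H+1})\bigr),$$
and each such group vanishes because $|W_\sigma^H|$ is even while $|W_\tau^H+1|=|W_\tau^H|+1$ is odd, for every subgroup $H\leq G$. Here I would remark that $W_\sigma$, being a direct sum of honest complex representations (each $W_{\sigma_i}$ is a difference-then-tensor expression that is in fact a genuine complex representation since the subtraction only removes summands that are present), has $W_\sigma^H$ even-dimensional over $\R$ for every $H$ — this is the only genuinely new bookkeeping point compared to the $\C P(V)$ case, and it is routine. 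Hence every component of $\delta_i$ is zero, so $\delta_i\simeq_\Q 0$, and the cofiber sequence splits: $Gr_n(V)^{(i)}_+\simeq_\Q Gr_n(V)^{(i-1)}_+\vee \bigvee_{|\sigma|=i} S^{W_\sigma}$.

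Finally I would assemble the pieces: running the induction up to $i=d=n(\dim V-n)$ gives $\Sigma^\infty Gr_n(V)_+\simeq_\Q \bigvee_{|\sigma|\leq d} S^{W_\sigma}=\bigvee_\sigma S^{W_\sigma}$, the last equality because every Schubert symbol $\sigma$ satisfies $|\sigma|\leq d$. One small technical caveat to address: the filtration is finite here, so there is no $\lim^1$ or telescope subtlety; a clean way to phrase the splitting across stages is to note that a finite wedge of cofiber sequences, each of which splits, assembles to the desired total splitting by a downward induction, or equivalently that the iterated mapping cone is rationally the wedge of the cofibers. I do not anticipate a serious obstacle; the main thing to get right is the even-dimensionality of $W_\sigma^H$ for all $H$, which underpins the parity argument, and the careful indexing of Schubert symbols so that the wedge in the statement matches the union of cells over all skeletal degrees.
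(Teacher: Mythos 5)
Your proposal matches the paper's proof essentially verbatim: induction on the skeletal filtration $Gr_n(V)^{(i)}$, using the cofiber sequence \eqref{eq:cof seq ab} and the even/odd parity of $|W_\sigma^H|$ versus $|W_\tau^H+1|$ for all $H\leq G$ together with the algebraic model \eqref{eq:id alg model} to kill the connecting map. The extra bookkeeping you flag (genuineness of $W_\sigma$ as a complex representation, finiteness of the filtration) is correct and harmless.
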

\begin{proof}
We show by induction on $i$ that the cofiber sequence \eqref{eq:cof seq ab}
$$Gr_n(V)^{(i-1)}_+\to Gr_n(V)^{(i)}_+\to \bigvee_{|\sigma|=i}S^{W_\sigma}$$
splits rationally. By the induction step
 $$Gr_n( V)^{(i-1)}_+\simeq_\Q Gr_n( V)^{(i-2)}_+\bigvee\bigvee_{|\sigma|=i-1}S^{W_\sigma}\simeq_\Q\bigvee_{|\sigma|\leq i-1}S^{W_\sigma}.$$
 
 We then show for $i$ by observing the connecting map in \eqref{eq:cof seq ab}
 $$\bigvee_{|\sigma|=i}S^{W_\sigma}\xrightarrow[]{\delta_i}\Sigma Gr_n(V)^{(i-1)}_+\simeq_\Q \bigvee_{|\sigma|\leq i-1}S^{W_\sigma+1}$$
 is a zero map.
 For all subgroups $H\leq G$, the fixed points $|W_\sigma^H|$ are even-dimensional and $|W_\sigma^H+1|$ are odd-dimensional. Therefore the map $\delta_i=0$ by \eqref{eq:id alg model}, and this proves the theorem by induction.
\end{proof}

\section{Rational stable homotopy type of $\C P(V)$ over general groups}\label{sec:cp(v)gen}
For the general finite groups $G$, it is no longer true that every complex $G$-representation is a direct sum of one-dimensional representations. In this case, we write $V$ as a direct sum of the irreducible representations of $G$. We then show that the complex projective space $\C P(V)$ rationally splits as a wedge of Thom spaces over some lower complex projective spaces (Theorem \ref{thm:split thom space}). First we recall some related constructions in \S\ref{subsec:cof thom space}, and compute the $H$-fixed points of $\C P(V)$.

 Given a complex $n$-dimensional representation $V$ and the irreducible representations $\psi_i$, we rewrite \eqref{sum of irred}
\begin{myeq}\label{eq:sum gen irred}
    V=\sum_{i=1}^{n}\psi_i \text{ and } V_i=\sum_{j=1}^{i}\psi_j.
\end{myeq}Following \eqref{eq2:cof v, v'} we may resemble \eqref{eq:thom cof seq} to get a cofiber sequence 
\begin{myeq}\label{eq:thom cof seq 2}
     \C P(V_{i})_+\to\C P(V_{i+1})_+\to \text{Th}(V_{i}\otimes\gamma_{\psi_{i+1}})
 \end{myeq}where the canonical line bundle $\gamma_{\psi_{i+1}}$ is given by
 \begin{myeq}\label{eq:line bundle gamma psi}
     S(\psi_{i+1})\times_\T\C\xrightarrow[]{\gamma_{\psi_{i+1}}}\C P(\psi_{i+1}), (v,z)\mapsto\text{span}\{v\}.
 \end{myeq}

For every subgroup $H\leq G$, let $i_H^*:\text{Rep}(G)\to\text{Rep}(H)$ be the restriction. We write
\begin{myeq}\label{eq:sum irred res}
    i_H^*V=\sum_{i=1}^{k}n_i\phi_i\oplus\sum_{j=1}^{l}m_j\alpha_j
\end{myeq}where all $\phi_i$ are one-dimensional $H$-representations, $\alpha_j$ are irreducible $H$-representations with $\dim(\alpha_j)>1$, and the multiplicities $n_i, m_j>0$. Under the restriction, we observe that the $H$-invariant complex lines are basically the one-dimensional $H$-subrepresentations of $i_H^*V$, which are contained in the first term of \eqref{eq:sum irred res}.

\begin{prop}\label{prop:fixed-pt CP(V)}
    For $H\leq G$ with $|H|=k$ and notation \eqref{eq:sum irred res}
    $$\C P(V)^H\simeq\bigsqcup_{i=1}^{k}\C P^{n_i-1}.$$
\end{prop}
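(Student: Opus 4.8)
The plan is to identify the $H$-fixed points of $\C P(V)$ directly from the definition of $\C P(V)$ as the space of complex lines in $V$. A complex line $L \subset V$ is $H$-fixed (as a point of $\C P(V)$) precisely when $h \cdot L = L$ for all $h \in H$; that is, $L$ is a one-dimensional $H$-subrepresentation of $i_H^*V$. Using the decomposition \eqref{eq:sum irred res}, any one-dimensional $H$-subrepresentation of $i_H^*V$ must be isomorphic, as an $H$-representation, to one of the $\phi_i$ (it cannot land in any $\alpha_j$ isotypic component since $\dim \alpha_j > 1$ and $\alpha_j$ is irreducible, so it has no one-dimensional subrepresentation). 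Moreover a line on which $H$ acts by the character $\phi_i$ must lie entirely inside the $\phi_i$-isotypic summand $n_i\phi_i$, because the isotypic decomposition is canonical and $H$-equivariant.

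Next I would partition the fixed-point set according to which character $\phi_i$ the line carries. This gives a decomposition
$$\C P(V)^H = \bigsqcup_{i=1}^{k}\{\,L \in \C P(i_H^*V) \mid H \text{ acts on } L \text{ by } \phi_i\,\}.$$
For a fixed $i$, the set of lines inside $n_i\phi_i$ on which $H$ acts by $\phi_i$ is exactly the projectivization of the underlying vector space of $n_i\phi_i$: choosing any $H$-isomorphism $n_i\phi_i \cong \phi_i \otimes \C^{n_i}$ (with $H$ acting trivially on $\C^{n_i}$), the $\phi_i$-lines correspond bijectively, and homeomorphically, to complex lines in $\C^{n_i}$, i.e.\ to $\C P^{n_i - 1}$. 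One should check that distinct characters $\phi_i$ cannot be simultaneously carried by a single line (immediate) and that the isotypic summands are mutually "orthogonal" in the sense that a $\phi_i$-line cannot stray into the $\phi_j$-part for $j \neq i$ — again this is the canonicity of isotypic decomposition. Assembling the pieces yields the claimed homeomorphism $\C P(V)^H \cong \bigsqcup_{i=1}^k \C P^{n_i - 1}$, where the convention $\C P^{-1} = \emptyset$ handles any $i$ with $n_i = 0$ (though \eqref{eq:sum irred res} assumes $n_i > 0$ for the characters listed).

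The only genuinely delicate point is the topological identification of each piece with $\C P^{n_i-1}$: one needs that the subspace of $\C P(i_H^*V)$ consisting of $\phi_i$-lines is not merely set-theoretically but \emph{topologically} a copy of $\C P^{n_i-1}$, i.e.\ that the isotypic projection $V \to n_i\phi_i$ induces a homeomorphism onto its image on this subspace. This follows because the $\phi_i$-lines are exactly the lines contained in the closed $G$-invariant subspace $n_i\phi_i \subset i_H^*V$ (a line carrying character $\phi_i$ is fixed, hence maps isomorphically under the projection, hence lies in that summand), so the subspace in question literally equals $\C P(n_i\phi_i) \cong \C P^{n_i-1}$ as a subspace of $\C P(i_H^*V)$, and the disjoint union over $i$ is a decomposition into clopen pieces since the $\C P(n_i\phi_i)$ are pairwise disjoint closed subsets whose union is all of $\C P(V)^H$. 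I would also remark that the indexing of the $k$ characters is really by the number of distinct one-dimensional constituents of $i_H^*V$, which is bounded by but need not equal $|H|$ for non-abelian $H$; if the intended statement uses $k = |H|$ it is because $H$ is being taken abelian (all irreducibles one-dimensional) or because one allows $n_i = 0$ for the missing characters, and the proof above covers both readings verbatim.
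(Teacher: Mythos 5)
Your proposal is correct and follows essentially the same route as the paper's proof: identify the $H$-fixed lines with one-dimensional $H$-subrepresentations of $i_H^*V$, observe that each must lie entirely in a single one-dimensional isotypic summand $n_i\phi_i$, and identify the resulting piece with $\C P^{n_i-1}$. Your extra care about the topological (clopen) decomposition and your remark that the index $k$ is really the number of distinct one-dimensional constituents of $i_H^*V$ rather than $|H|$ are both points the paper glosses over, but they do not change the argument.
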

\begin{proof}
Use the presentation \eqref{eq:sum irred res} and write 
   $$\C P(V)^H=\C P(i_H^*V)^H=\{L\in\C P(V) ~|~ h\cdot L=L, \forall h\in H\}.$$
Since the lines $L$ are $H$-invariant subspaces, by the irreducibility of $V$ they must be contained in the one-dimensional representations of $i_H^*V$. Then
\begin{myeq}\label{eq:fixed-pt rep}
    \C P(i_H^*V)^H\simeq\C P(n_1\phi_1+n_2\phi_2+\cdots+n_k\phi_k)^H.
\end{myeq}However a line $L\subset V$ is fixed by a subgroup $H\leq G$ if and only if $H$ caries $L$ to itself. It implies that $L$ lies entirely in one of the isotypical components of $i_H^*V$, which is,
   $$L\subset n_i\phi_i \text{ for some } i.$$

Therefore we have
   \begin{align*}
       \C P(V)^H
       &\simeq\C P(n_1\phi_1+n_2\phi_2+\cdots+n_k\phi_k)^H\\
       &\simeq\C P(\C^{n_1})\sqcup\C P(\C^{n_2})\sqcup\cdots\sqcup\C P(\C^{n_k})\\
       &\simeq\bigsqcup_{i=1}^{k}\C P^{n_i-1}
   \end{align*}
   and the claim follows.
\end{proof}

\begin{exam}[Abelian group]
Let $G$ be an Abelian group with order $n$. Then we can write a $G$-representation $V$ as a direct sum of one-dimensional $\phi_i$, that is, 
$$V=\sum_{i=0}^{n-1}n_i\phi_i.$$
As all $G$-invariant complex lines are exactly the one-dimensional representations of $V$, 
$$\C P(V)^G\simeq\bigsqcup_{i=0}^{n-1}\C P^{n_i-1}.$$
\end{exam}
\vspace{1cm}

Fix a subgroup $H\leq G$. As in \eqref{eq:sum irred res} we write
    $$i_H^*\psi_{i+1}=\sum_{j=1}^{k}n_j\phi_j\oplus \sum_{r=1}^{l}m_r\psi_r$$
such that $\phi_j$ are one-dimensional $H$-representations, $\psi_r$ are $H$-irreducible representations with $\dim(\psi_r)>1$, and $n_j, m_r>0$. Then by Proposition \ref{prop:fixed-pt CP(V)} we obtain
\begin{myeq}\label{eq:split CP(psi_{i+1}}
    \C P(\psi_{i+1})^H\simeq \C P(i_H^*\psi_{i+1})^H\simeq\bigsqcup_{j=1}^{k}\C P^{n_{j}-1}. 
\end{myeq}

Apply the fixed point $\Phi^H(-)$ on \eqref{eq:line bundle gamma psi} and write 
\begin{myeq}\label{eq:fixed line bundle}
    S(i_H^*\psi_{i+1})\times_\T\C\xrightarrow[]{\gamma_{n_j}}\C P(n_j\phi_j)\subseteq\C P(i_H^*\psi_{i+1})
\end{myeq}for the subbundle of $\gamma_{\psi_{i+1}}$ over a component of the $H$-fixed points. We compute the space level $H$-fixed points of  $\gamma_{n_i}$ fiberwise. An element $x\in D(V_i\otimes\gamma_{n_i})$ is $H$-fixed if $\gamma_{\psi_{i+1}}(x)\in\C P(\psi_{i+1})$ is $H$-fixed and $x$ is $H$-fixed in $\gamma_{\psi_{i+1}}^{-1}\gamma_{\psi_{i+1}}(x)$ under the induced $H$-action. More explicitly we view this $H$-fixed action via a pullback bundle
\[\begin{tikzcd}
	{i_H^*V_i\otimes\gamma_{n_i}} & {V_i\otimes\gamma_{\psi_{i+1}}} \\
	{\mathbb{C}P^{n_i-1}\cong\mathbb{C}P(n_i\phi_i)} & {\mathbb{C}P(\psi_{i+1})}
	\arrow[from=1-1, to=1-2]
	\arrow[from=1-1, to=2-1]
	\arrow[from=1-2, to=2-2]
	\arrow["{H-\text{map}}"', from=2-1, to=2-2]
\end{tikzcd}\]
From the $H$-map in the above diagram it follows that all elements in $D(V_i\otimes\gamma_{\psi_{i+1}})^H$ are the families of $H$-fixed points 
\begin{myeq}\label{eq:fixed disc bundle}
   \{x\in D(V_i\otimes\gamma_{\psi_{i+1}})^H ~|~ \gamma_{\psi_{i+1}}(x)\in\C P^{n_i-1}\}=D(i_H^*V_i\otimes\gamma_{n_i})^H=D(V_i^H\otimes\gamma_{n_i}). 
\end{myeq}

With this ingredient, we are ready to state the following decomposition.
\begin{prop}\label{prop:fixed pt thom space}
For $H\leq G$ such that $|H|=k$ with notations \eqref{eq:sum gen irred} and \eqref{eq:fixed line bundle}
    $$\text{Th}(V_i\otimes\gamma_{\psi_{i+1}})^H\simeq\bigvee_{j=1}^{k}\text{Th}(V_i^H\otimes\gamma_{n_j})$$
    where $\gamma_{n_j}$ is defined in \eqref{eq:fixed line bundle}.
\end{prop}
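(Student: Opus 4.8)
The plan is to compute $\text{Th}(V_i\otimes\gamma_{\psi_{i+1}})^H$ directly from the disk--sphere model of the Thom space and then decompose it over the fixed locus of the base $\C P(\psi_{i+1})$. Recall that $\text{Th}(E)=D(E)/S(E)$, and that $S(E)\hookrightarrow D(E)$ is a closed $G$-cofibration, so the fixed-point functor $(-)^H$ commutes with this quotient; hence $\text{Th}(V_i\otimes\gamma_{\psi_{i+1}})^H\cong D(V_i\otimes\gamma_{\psi_{i+1}})^H\big/S(V_i\otimes\gamma_{\psi_{i+1}})^H$. Thus it is enough to identify the fixed disk bundle and the fixed sphere bundle.

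First I would observe that the projection $\gamma_{\psi_{i+1}}\colon V_i\otimes\gamma_{\psi_{i+1}}\to\C P(\psi_{i+1})$ is $H$-equivariant, so every $H$-fixed point of the total space lies over a point of $\C P(\psi_{i+1})^H$. By Proposition \ref{prop:fixed-pt CP(V)} applied to $\psi_{i+1}$, this fixed locus splits as $\bigsqcup_{j=1}^k\C P^{n_j-1}$, with the $j$-th component identified with $\C P(n_j\phi_j)$. Consequently $D(V_i\otimes\gamma_{\psi_{i+1}})^H$ is the disjoint union over $1\le j\le k$ of the portion of the fixed disk bundle lying over $\C P^{n_j-1}$, and similarly for $S(V_i\otimes\gamma_{\psi_{i+1}})^H$.

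Next I would identify the part of $D(V_i\otimes\gamma_{\psi_{i+1}})^H$ over the $j$-th component with $D(V_i^H\otimes\gamma_{n_j})$, $\gamma_{n_j}$ being the line bundle of \eqref{eq:fixed line bundle}: this is exactly the fiberwise computation recorded in \eqref{eq:fixed disc bundle} (via the pullback square preceding it), and the same argument run on unit vectors instead of the whole disk identifies the corresponding part of the fixed sphere bundle with $S(V_i^H\otimes\gamma_{n_j})$. The one point that warrants a short justification is local triviality: the fiberwise $H$-fixed subspaces must glue into a genuine sub-vector-bundle over each connected $\C P^{n_j-1}$, which follows from local triviality of $V_i\otimes\gamma_{\psi_{i+1}}$ together with the fact that the $H$-isotypical type of the fibre is locally constant along a component of the fixed base. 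Together these give $D(V_i\otimes\gamma_{\psi_{i+1}})^H=\bigsqcup_{j=1}^k D(V_i^H\otimes\gamma_{n_j})$ and $S(V_i\otimes\gamma_{\psi_{i+1}})^H=\bigsqcup_{j=1}^k S(V_i^H\otimes\gamma_{n_j})$.

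Finally, I would use that the Thom space of a bundle over a disjoint union of bases is the wedge of the Thom spaces of the restrictions, i.e.\ $\bigl(\bigsqcup_j D(E_j)\bigr)\big/\bigl(\bigsqcup_j S(E_j)\bigr)\cong\bigvee_j\text{Th}(E_j)$, to conclude $\text{Th}(V_i\otimes\gamma_{\psi_{i+1}})^H\simeq\bigvee_{j=1}^k\text{Th}(V_i^H\otimes\gamma_{n_j})$. I expect the only real obstacle to be the bundle bookkeeping in the third step — verifying that the fiberwise fixed points form a locally trivial sub-bundle of the asserted isomorphism type, and in particular keeping careful track of which line-bundle twist survives restriction to the fixed base — since commuting $(-)^H$ past the disk--sphere quotient and splitting the Thom space of a disconnected base are formal.
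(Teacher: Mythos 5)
Your proposal is correct and follows essentially the same route as the paper: both reduce to the disk--sphere model, use the $H$-equivariance of the projection together with Proposition \ref{prop:fixed-pt CP(V)} to split the fixed disk (and sphere) bundle over the components $\C P^{n_j-1}$ of the fixed base as in \eqref{eq:fixed disc bundle}, and then identify the quotient of a disjoint union of disk--sphere pairs with a wedge of Thom spaces. Your added remark on local triviality of the fiberwise fixed sub-bundle is a point the paper leaves implicit, but it does not change the argument.
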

\begin{proof}
Choose a $G$-invariant inner product on $V_i\otimes\gamma_{\psi_{i+1}}$ and write
$$\text{Th}(V_i\otimes\gamma_{\psi_{i+1}})\cong D(V_i\otimes\gamma_{\psi_{i+1}})/S(V_i\otimes\gamma_{\psi_{i+1}}).$$
Now we compute the space level $H$-fixed points. First we observe that  
$$\text{Th}(V_i\otimes\gamma_{\psi_{i+1}})^H\simeq (D(V_i\otimes\gamma_{\psi_{i+1}})^H\setminus S(V_i\otimes\gamma_{\psi_{i+1}})^H)^+$$
where $(-)^+$ denotes the one-point compactification.

By \eqref{eq:fixed disc bundle} we get
$$D(V_i\otimes\gamma_{\psi_{i+1}})^H\simeq\bigsqcup_{j=1}^kD(V_i\otimes{\gamma_{\psi_{i+1}}}_{|_{\C P^{n_j-1}}})^H\simeq\bigsqcup_{j=1}^kD(V_i^H\otimes\gamma_{n_j}).$$

Following the presentations \eqref{eq:fixed line bundle} and \eqref{eq:fixed disc bundle},
\begin{align*}
    \text{Th}(V_i\otimes\gamma_{\psi_{i+1}})^H
    &\simeq(\bigsqcup_{j=1}^{k}(D(V_i^H\otimes\gamma_{n_j})\setminus S(V_i^H\otimes\gamma_{n_j})))^+\\
    &\simeq\bigvee_{j=1}^{k}\text{Th}(V_i^H\otimes\gamma_{n_j}).
\end{align*}
\end{proof}

 By applying the Thom isomorphism \cite[10.2]{MS74} to Proposition \ref{prop:fixed pt thom space}, we immediately obtain the following corollary.
\begin{cor}\label{cor:homology thom space}
   The rational homology groups
    $$H_*(\text{Th}(V_i\otimes\gamma_{\psi_{i+1}})^H;\Q)$$
    are concentrated in even degrees for all subgroups $H\leq G$. 
\end{cor}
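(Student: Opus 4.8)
The plan is to read this off directly from Proposition~\ref{prop:fixed pt thom space}. Fix a subgroup $H\le G$ with $|H|=k$. By that proposition there is a homotopy equivalence
$$\text{Th}(V_i\otimes\gamma_{\psi_{i+1}})^H\simeq\bigvee_{j=1}^{k}\text{Th}(V_i^H\otimes\gamma_{n_j}),$$
and since reduced rational homology sends a finite wedge to a direct sum, it is enough to prove that each summand $\widetilde{H}_*(\text{Th}(V_i^H\otimes\gamma_{n_j});\Q)$ is concentrated in even degrees; the unreduced homology of the whole fixed-point space then differs from this direct sum only by a single $\Q$ in degree $0$.

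For the individual summands I would argue as follows. The $H$-fixed subspace $V_i^H$ is an honest complex vector space with trivial $H$-action, so $V_i^H\otimes\gamma_{n_j}$ is a complex vector bundle over $\C P^{n_j-1}\cong\C P(n_j\phi_j)$ of complex rank $\dim_\C V_i^H$, hence a rationally oriented real vector bundle of even rank $2\dim_\C V_i^H$. The Thom isomorphism \cite[10.2]{MS74} with rational coefficients then gives
$$\widetilde{H}_*(\text{Th}(V_i^H\otimes\gamma_{n_j});\Q)\cong H_{*-2\dim_\C V_i^H}(\C P^{n_j-1};\Q).$$

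It remains only to recall that $H_*(\C P^{n_j-1};\Q)$ is a copy of $\Q$ in each even degree $0,2,\dots,2(n_j-1)$ and vanishes in odd degrees; shifting this by the even integer $2\dim_\C V_i^H$ preserves the property of being concentrated in even degrees. Reassembling the summands, and restoring the degree-$0$ class of the unreduced homology, finishes the argument. I do not anticipate a genuine obstacle here: the corollary is essentially a formal consequence of Proposition~\ref{prop:fixed pt thom space}, and the only extra inputs—that a complex bundle is rationally orientable of even real rank, and that complex projective space has rational homology only in even degrees—are standard. The one point worth stating carefully is that the Thom isomorphism is applied over $\Q$ to the genuinely non-equivariant bundles appearing on the fixed points, which is legitimate precisely because Proposition~\ref{prop:fixed pt thom space} has already identified those fixed points with ordinary Thom spaces.
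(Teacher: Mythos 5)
Your proof is correct and follows essentially the same route as the paper: decompose the fixed points via Proposition~\ref{prop:fixed pt thom space}, apply the rational Thom isomorphism componentwise, and use that $\C P^{n_j-1}$ has rational homology only in even degrees. If anything, your degree shift $2\dim_{\C} V_i^H$ is the more careful one --- the paper writes the shift as $2\dim V_i$ --- but since both are even the parity conclusion is unaffected.
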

\begin{proof}
    Use Proposition \ref{prop:fixed pt thom space} and apply the Thom isomorphism
    $$\tilde{H}_*(\text{Th}(V_i\otimes\gamma_{\psi_{i+1}})^H;\Q)\cong H_{*-2d}(\C P(\psi_{i+1})^H;\Q)$$
    where $d=\text{dim}V_i$. We claim that the right term lies within even degrees. 
    
    From the decomposition of $\C P(\psi_{i+1})$ in \eqref{eq:split CP(psi_{i+1}}
    $$H_{*-2d}(\C P(\psi_{i+1})^H;\Q)\cong H_{*-2d}(\bigsqcup_{j=1}^{k}\C P^{n_j-1};\Q)\cong\bigoplus_{j=1}^{k}H_{*-2d}(\C P^{n_j-1};\Q)$$
    which are concentrated in even degrees. Then it simply proves the claim.
\end{proof}

As a generalization to Theorem \ref{splitting CP(V)}, we now have the decomposition of the $\C P(V)$ for $V$ a complex representation of any finite group $G$. 
\begin{thm}\label{thm:split thom space}
    There is an equivalence
    $$\Sigma^\infty\C P(V)_+\simeq_\Q \bigvee_{i=0}^{n-1}\text{Th}(V_i\otimes\gamma_{\psi_{i+1}})$$
    with notations $V, V_i$ in \eqref{eq:sum gen irred} and $\gamma_{\psi_{i+1}}$ in \eqref{eq:line bundle gamma psi}.
\end{thm}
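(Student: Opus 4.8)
The plan is to mimic the proof of Theorem \ref{splitting CP(V)}, replacing the sphere summands by Thom spaces and replacing the even-dimensionality of fixed spheres by Corollary \ref{cor:homology thom space}. First I would set up the induction on $n = \dim V$, using the cofiber sequence \eqref{eq:thom cof seq 2}
$$\C P(V_{n-1})_+ \to \C P(V)_+ \to \text{Th}(V_{n-1}\otimes\gamma_{\psi_n}),$$
with the inductive hypothesis $\Sigma^\infty\C P(V_{n-1})_+ \simeq_\Q \bigvee_{i=0}^{n-2}\text{Th}(V_i\otimes\gamma_{\psi_{i+1}})$, where the base case $n=1$ is the trivial one since $\C P(V_1)$ is a point and $\text{Th}(V_0\otimes\gamma_{\psi_1}) = S^0$. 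As in Theorem \ref{splitting CP(V)}, it suffices to show that the connecting map
$$\text{Th}(V_{n-1}\otimes\gamma_{\psi_n}) \xrightarrow{\delta_n} \Sigma\C P(V_{n-1})_+ \simeq_\Q \bigvee_{i=0}^{n-2}\Sigma\,\text{Th}(V_i\otimes\gamma_{\psi_{i+1}})$$
is rationally null.

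To see that $\delta_n = 0$, I would invoke the algebraic model \eqref{eq:id alg model}: the map $\delta_n$ is determined by its components
$$[\text{Th}(V_{n-1}\otimes\gamma_{\psi_n}),\, \Sigma\,\text{Th}(V_i\otimes\gamma_{\psi_{i+1}})]^G \cong \prod_k\prod_{H\leq G}\text{Hom}_{\Q[W_GH]}\bigl(\pi_k(\Phi^H\text{Th}(V_{n-1}\otimes\gamma_{\psi_n})),\, \pi_k(\Phi^H\Sigma\,\text{Th}(V_i\otimes\gamma_{\psi_{i+1}}))\bigr).$$
By Proposition \ref{prop:fixed pt thom space}, each geometric fixed-point spectrum $\Phi^H\text{Th}(V_j\otimes\gamma_{\psi_{j+1}})$ is (the suspension spectrum of) a wedge of Thom spaces of the form $\text{Th}(V_j^H\otimes\gamma_{n})$, and by Corollary \ref{cor:homology thom space} its rational homotopy groups (equivalently, by the Hurewicz theorem for rational wedges of even Thom spaces, its rational homology) are concentrated in even degrees. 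The target $\pi_k(\Phi^H\Sigma\,\text{Th}(V_i\otimes\gamma_{\psi_{i+1}}))$ is a one-fold suspension of such a spectrum, hence has rational homotopy concentrated in odd degrees. Therefore every $\Q[W_GH]$-module homomorphism in the product above is zero, so $\delta_n = 0$, the cofiber sequence splits rationally, and the induction closes.

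The main obstacle is making precise the passage from "rational homology of fixed points concentrated in even degrees'' (Corollary \ref{cor:homology thom space}) to "$\pi_k(\Phi^H(-))$ concentrated in even degrees,'' which is what the Hom-computation in \eqref{eq:id alg model} actually uses. For a simply connected (or at least nilpotent) space whose rational homology is even, the rational homotopy groups are likewise even, and a Thom space over a simply connected base satisfies this once the base $\C P^{n_j-1}$ is simply connected; the degenerate pieces (points, $S^0$) must be handled separately but are harmless. Alternatively, and perhaps more cleanly, one can bypass homotopy groups entirely: since we work in the algebraic model $\Spec_\Q^G \simeq \prod_H D(\Q[W_GH])$, an object is determined by the chain complexes $C_*(\Phi^H(-);\Q)$ up to quasi-isomorphism, and a map between objects whose homologies are respectively concentrated in even and odd degrees is necessarily zero in the derived category; this is exactly the same mechanism as in Theorem \ref{splitting CP(V)} and Theorem \ref{thm:split Gr_n(V)}. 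I would phrase the argument this way to keep it uniform with the earlier proofs. A minor bookkeeping point to verify is that the suspension and the wedge decomposition of $\Phi^H$ interact as expected — i.e. that $\Phi^H$ commutes with suspension and finite wedges — which is standard.
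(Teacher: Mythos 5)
Your proposal is correct and follows essentially the same route as the paper's proof: induction on $n$ via the cofiber sequence \eqref{eq:thom cof seq 2}, with the connecting map killed because the geometric fixed points of the source and target have rational homology concentrated in even and odd degrees respectively, via \eqref{eq:id alg model}. Your worry about passing from even rational homology to even rational homotopy of fixed points is resolved exactly as in your "cleaner" alternative — $\Phi^H$ lands in (rational) spectra, where $\pi_*\otimes\Q$ agrees with $H_*(-;\Q)$ — so no unstable rational homotopy theory is needed, and this is implicitly how the paper argues as well.
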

\begin{proof} 
We prove by induction on $n=\text{dim}V$. Recall the notation \eqref{eq:sum gen irred} and the cofiber sequence \eqref{eq:thom cof seq 2}
  $$\C P(V_{i})_+\to\C P(V_{i+1})_+\to \text{Th}(V_{i}\otimes\gamma_{\psi_{i+1}}).$$
Inductively we assume that
 $$\C P(V_i)_+\simeq_\Q\C P(V_{i-1})_+ \bigvee \text{Th}(V_{i-1}\otimes\gamma_{\psi_{i}}).$$
It is enough to show that the connecting map
 $$\text{Th}(V_{n-1}\otimes\gamma_{\psi_{n}})\xrightarrow[]{\delta_n}\Sigma\C P(V_{n-1})_+$$
 is the zero map. 

 By the induction step for $\C P(V_i)_+$, we have
 $$\C P(V_{n-1})_+\simeq_\Q \C P(V_{n-2})_+\bigvee \text{Th}(V_{n-2}\otimes\gamma_{\psi_{n-1}}).$$
 We then prove for $i=n$ as follows. Consider the connecting map
 $$\text{Th}(V_{n-1}\otimes\gamma_{\psi_{n}})\xrightarrow[]{\delta_n}\Sigma\C P(V_{n-1})_+\simeq_\Q\Sigma\C P(V_{n-2})_+\bigvee\Sigma\text{Th}(V_{n-2}\otimes\gamma_{\psi_{n-1}})$$
 and apply the geometric fixed point functor $\Phi^H(-)$
  $$\text{Th}(V_{n-1}\otimes\gamma_{\psi_{n}})^H\xrightarrow[]{\Phi^H(\delta_n)}\Sigma\C P(V_{n-2})_+^H\bigvee\Sigma\text{Th}(V_{n-2}\otimes\gamma_{\psi_{n-1}})^H.$$
  
  Following Proposition \ref{prop:fixed-pt CP(V)} and Corollary \ref{cor:homology thom space}, the rational homology groups of the left term are concentrated in even degrees, while those of the right term are concentrated in odd degrees. This shows that the connecting map is the zero map by \eqref{eq:id alg model}, and the result follows. 
\end{proof}

\section{Rational stable homotopy type of $Gr_n(V)$ over general groups}\label{sec:gr_n(v)}
The purpose of this section is to show that the Grassmannian $Gr_n(V)$ rationally splits as a smash product of Thom spaces over lower Grassmannians on irreducible representations in $V$ (Theorem \ref{thm:general split gr}). First we introduce some constructions, and then deduce the necessary elements for proving the main result.

Suppose $V$ is an $m$-dimensional representation equipped with a $G$-invariant inner product. Express $V$ as the orthogonal direct sum $V=V'\oplus V_0$, so that $V'=V-V_0$. We then form the following definition.
\begin{defn}\label{def:dim 1 gr}
     Let $V_0$ be a subspace of $V$. Define 
     $$Gr_n(V)_{V_{0,1}}=\{W\in Gr_n(V) ~|~ \text{dim}(W\cap V_0)\geq 1\}.$$
 \end{defn}
By this Definition \ref{def:dim 1 gr} it follows that
\begin{myeq} \label{eq:seq dim 1 gr}
  Gr_n(V)_{V_{0,1}+}\to Gr_n(V)_+\to Gr_n(V)/Gr_n(V)_{V_{0,1}}
\end{myeq}is a cofiber sequence. The above quotient is described more explicitly as follows. Consider a projection map $p$ in a commutative triangle
\[\begin{tikzcd}
	W & V \\
	& {V'}
	\arrow[hook, from=1-1, to=1-2]
	\arrow[hook', from=1-1, to=2-2]
	\arrow["p", from=1-2, to=2-2]
\end{tikzcd}\]
and define an associated map
$$Gr_n(V)\setminus Gr_n(V)_{V_{0,1}}\to Gr_n(V'), W\mapsto p(W)$$
where $W$ is identified as in Definition \ref{def:dim 1 gr}. It shows that the space $Gr_n(V)\setminus Gr_n(V)_{V_{0,1}}$ naturally corresponds to a vector bundle
\begin{myeq}\label{eq:n bundle gr}
   E(V_0\otimes\xi_n)\xrightarrow[]{ V_0\otimes\xi_n} Gr_n(V')
\end{myeq}
where $\xi_n$ is the canonical $n$-bundle over $Gr_n(V')$. Then it gives a $G$-homeomorphism
$$Gr_n(V)\setminus Gr_n(V)_{V_{0,1}}\cong E(V_0\otimes\xi_n)$$
and induces homotopy equivalences
$$Gr_n(V)/Gr_n(V)_{V_{0,1}}\simeq (E(V_0\otimes\xi_n))^+\simeq\text{Th}(Gr_n(V'),V_0\otimes\xi_n).$$
Using this information the cofiber sequence \eqref{eq:seq dim 1 gr} becomes
\begin{myeq} \label{eq:sim seq dim 1 gr}
  Gr_n(V)_{V_{0,1}+}\to Gr_n(V)_+\to \text{Th}(Gr_n(V'),V_0\otimes\xi_n).
\end{myeq}

In the same terminology as in \S\ref{sec:cp(v)} we claim that this sequence \eqref{eq:sim seq dim 1 gr} splits in $\Spec_\Q^G$ and gives the stable splitting of $Gr_n(V)$ in Theorem \ref{thm:general split gr}. To provide a suitable description in the context of $Gr_n(V)$, we need to rewrite some of the previous expressions.

First rewrite the restriction in \eqref{eq:sum irred res} for $V$ as
\begin{myeq}\label{eq:res rep v}
    i_H^*V=\sum_{i=1}^{k}n_i\psi_i
\end{myeq}where $\psi_i$ are irreducible $H$-representations and $n_i>0$. Using this expression we obtain the following consequence. 
\begin{prop}\label{prop:fixed-pt gr}
    For $H\leq G$ with $|H|=k$ and expression \eqref{eq:res rep v}
     $$Gr_n(V)^H\simeq \bigsqcup_{\sum\limits_{i=1}^{k}m_i\text{dim}\psi_i=n} \prod_{i=1}^{k}Gr_{m_i}(\C^{n_i}).$$
\end{prop}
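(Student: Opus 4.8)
The plan is to reduce the computation to a purely representation-theoretic classification of $H$-invariant subspaces of $i_H^*V$, and then check that this classification is topological. First I would observe that a complex $n$-plane $W\subseteq V$ is a point of $Gr_n(V)^H$ if and only if $h\cdot W=W$ for all $h\in H$, i.e. if and only if $W$ is an $H$-subrepresentation of $i_H^*V$ of complex dimension $n$. So, as a set, $Gr_n(V)^H$ is exactly the set of such subrepresentations.

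Next I would invoke the isotypic decomposition $i_H^*V\cong\bigoplus_{i=1}^k\psi_i\otimes\C^{n_i}$, where $\C^{n_i}$ is the multiplicity space carrying the trivial $H$-action. Any $H$-subrepresentation $W$ splits compatibly as $W=\bigoplus_{i=1}^k W_i$ with $W_i=W\cap(\psi_i\otimes\C^{n_i})$, since isotypic components are canonical and preserved by every $H$-map. By Schur's lemma, $\Hom_H(\psi_i,\psi_i\otimes\C^{n_i})\cong\C^{n_i}$, and it follows that every $H$-subrepresentation $W_i\subseteq\psi_i\otimes\C^{n_i}$ has the form $W_i=\psi_i\otimes U_i$ for a unique linear subspace $U_i\subseteq\C^{n_i}$. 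Setting $m_i=\dim_\C U_i\in\{0,1,\dots,n_i\}$, one gets $\dim_\C W=\sum_{i=1}^k m_i\dim\psi_i$, so the constraint $\dim W=n$ becomes $\sum_{i=1}^k m_i\dim\psi_i=n$. This produces a bijection
$$W\longmapsto(U_1,\dots,U_k),\qquad Gr_n(V)^H\xrightarrow{\ \cong\ }\bigsqcup_{\sum_i m_i\dim\psi_i=n}\ \prod_{i=1}^k Gr_{m_i}(\C^{n_i}),$$
the disjoint union running over tuples $(m_1,\dots,m_k)$ with $0\le m_i\le n_i$ satisfying the relation (with the convention $Gr_0(\C^{n_i})=\ast$ for $m_i=0$).

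Finally I would check that this bijection is a homeomorphism. The isotypic projections $i_H^*V\to\psi_i\otimes\C^{n_i}$ are continuous $H$-maps, so $W\mapsto W_i$ is continuous; moreover each $\dim W_i=\dim(W\cap(\psi_i\otimes\C^{n_i}))$ is upper semicontinuous and their sum is the constant $n$, hence each is locally constant, which is precisely what yields the decomposition of $Gr_n(V)^H$ into the open-and-closed pieces indexed by $(m_1,\dots,m_k)$. The remaining point is that the Schur identification $W_i\leftrightarrow U_i$ is a homeomorphism of Grassmannians: this follows because the linear isomorphism $\C^{n_i}\xrightarrow{\cong}\Hom_H(\psi_i,\psi_i\otimes\C^{n_i})$, $u\mapsto(v\mapsto v\otimes u)$, is natural and identifies $m_i$-dimensional subspaces $U_i\subseteq\C^{n_i}$ with $(m_i\dim\psi_i)$-dimensional $H$-submodules $W_i\subseteq\psi_i\otimes\C^{n_i}$ compatibly with the quotient topologies on Stiefel manifolds.

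The step I expect to be the main obstacle is the topological bookkeeping of the last paragraph — confirming that $W\mapsto(U_i)_i$ and its inverse are continuous and that the decomposition is genuinely a topological disjoint union. This is, however, the same style of fixed-point/bundle argument already used for $\C P(V)$ in Proposition \ref{prop:fixed-pt CP(V)} (the case $n=1$), so I do not anticipate any essentially new difficulty; the representation-theoretic heart of the statement is the Schur-lemma identification $W_i=\psi_i\otimes U_i$ and the resulting dimension count $n=\sum_i m_i\dim\psi_i$.
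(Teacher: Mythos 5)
Your proposal is correct and follows essentially the same route as the paper: decompose a fixed $n$-plane into its pieces in the $H$-isotypic components of $i_H^*V$, use Schur's lemma to identify the possible pieces in $n_i\psi_i$ of dimension $m_i\dim\psi_i$ with points of $Gr_{m_i}(\C^{n_i})$, and sum the dimension constraint to index the components. Your write-up is if anything slightly more careful than the paper's (which phrases the Schur step as a quotient of injective $H$-maps by $GL_{m_i}(\C)$ and does not address the topology), since you verify local constancy of the $m_i$ and the homeomorphism claim explicitly.
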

\begin{proof}
First notice that
$$Gr_n(V)^H=\{W\in Gr_n(V) ~|~ h\cdot W=W, \forall h\in H\}.$$
From this we see that every $n$-plane $W$ is an $H$-invariant subspace of $V$, so $W$ itself must splits into subspaces of $H$-isotypical components. In precise terms,
\begin{align*}
\{W\subset n_i\psi_i ~|~ \text{dim}W=m_i\text{dim}\psi_i, W \text{ is } H\text{-fixed}\}
&=\text{Hom}_H(m_i\psi_i,n_i\psi_i)/\text{Hom}_H(m_i\psi_i,m_i\psi_i)\\
&=\{\C^{m_i}\to\C^{n_i}\text{ injective }\}/GL_{m_i}(\C)\\
&=Gr_{m_i}(\C^{n_i}).
\end{align*}

If we write 
   $$ \underline{m}=(m_1,\cdots,m_k)\text{ such that } \sum_{i=1}^{k}m_i\text{dim}\psi_i=n,$$
\begin{myeq}\label{eq:not gr m,h}
    G_{\underline{m},H}=\{W\in Gr_n(V)^H~|~(W\cap n_i\psi_i)=m_i\psi_i\}\cong\prod_{i=1}^{k}Gr_{m_i}(\C^{n_i}).
\end{myeq}
From this it follows that
   \begin{align*}
       Gr_n(V)^H
       &\simeq\bigsqcup_{\sum\limits_{i=1}^{k}m_i\text{dim}\psi_i=n} Gr_{m_1}(\C^{n_1})\times\cdots\times Gr_{m_k}(\C^{n_k})\\
       &\simeq\bigsqcup_{\sum\limits_{i=1}^{k}m_i\text{dim}\psi_i=n}\prod_{i=1}^{k}Gr_{m_i}(\C^{n_i}).
   \end{align*}
\end{proof}
Now we compute the $H$-fixed points of the space $\text{Th}(Gr_n(V),U\otimes\xi_n)$ where $U$ is a complex $G$-representation. We denote by $\xi_{\underline{m},H}$ the restriction of $\xi_n$ to $Gr_{\underline{m},H}$.

\begin{prop}\label{prop:fixed thom n gr}
    For a subgroup $H\leq G$ and $U$ a complex $G$-representation 
    $$\text{Th}(Gr_n(V), U\otimes\xi_n)^H\simeq\bigvee_{\underline{m}}\text{Th}(G_{\underline{m},H}(V)^H, U^H\otimes\xi_{\underline{m},H}).$$
    It follows that $H_*(\text{Th}(Gr_n(V), U\otimes\xi_n)^H;\Q)$ are concentrated in even degrees.
\end{prop}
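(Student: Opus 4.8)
The strategy parallels Proposition \ref{prop:fixed pt thom space}, replacing $\C P(V)$ by $Gr_n(V)$ and the canonical line bundle by the canonical $n$-bundle $\xi_n$. First I would fix a $G$-invariant inner product on $U\otimes\xi_n$ and write the Thom space as the quotient $D(U\otimes\xi_n)/S(U\otimes\xi_n)$; taking $H$-fixed points of a one-point compactification commutes with passing to fixed points, so
\[
\text{Th}(Gr_n(V), U\otimes\xi_n)^H\simeq\big(D(U\otimes\xi_n)^H\setminus S(U\otimes\xi_n)^H\big)^+.
\]
The key geometric input is the fixed-point decomposition of the base from Proposition \ref{prop:fixed-pt gr}: $Gr_n(V)^H\simeq\bigsqcup_{\underline{m}}G_{\underline{m},H}(V)$, with each piece as in \eqref{eq:not gr m,h}. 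Over each component $G_{\underline{m},H}(V)$ the bundle $\xi_n$ restricts to $\xi_{\underline{m},H}$, and I would check fiberwise, exactly as in the line-bundle case leading to \eqref{eq:fixed disc bundle}, that the $H$-fixed subbundle of $U\otimes\xi_n$ over this component is $U^H\otimes\xi_{\underline{m},H}$: an element of the total space is $H$-fixed iff its image in $Gr_n(V)$ is $H$-fixed and the vector in the fiber is fixed under the induced $H$-action on that fiber. Assembling these over all $\underline{m}$ gives $D(U\otimes\xi_n)^H\simeq\bigsqcup_{\underline{m}}D(U^H\otimes\xi_{\underline{m},H})$, and likewise for $S(-)^H$.

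Now the disjoint union of the corresponding open disk bundles is compactified by a wedge (the basepoints at infinity are identified), which yields
\[
\text{Th}(Gr_n(V), U\otimes\xi_n)^H\simeq\Big(\bigsqcup_{\underline{m}}\big(D(U^H\otimes\xi_{\underline{m},H})\setminus S(U^H\otimes\xi_{\underline{m},H})\big)\Big)^+\simeq\bigvee_{\underline{m}}\text{Th}(G_{\underline{m},H}(V)^H, U^H\otimes\xi_{\underline{m},H}),
\]
which is the first claim. For the homology statement, I would apply the Thom isomorphism \cite[10.2]{MS74} to each wedge summand, getting $\tilde H_*(\text{Th}(G_{\underline{m},H}(V)^H, U^H\otimes\xi_{\underline{m},H});\Q)\cong H_{*-2r}(G_{\underline{m},H}(V)^H;\Q)$ for the appropriate rank $r$; since $G_{\underline{m},H}(V)^H\cong\prod_i Gr_{m_i}(\C^{n_i})$ is a product of ordinary complex Grassmannians, whose rational homology is concentrated in even degrees (and even degrees are closed under the Künneth product), the shift by $2r$ keeps everything even. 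Summing over $\underline{m}$ via the wedge finishes the argument.

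The main obstacle I anticipate is the fiberwise identification of the $H$-fixed subbundle as genuinely $U^H\otimes\xi_{\underline{m},H}$ rather than something larger: one must verify that on the component $G_{\underline{m},H}(V)$, the tautological $n$-plane is \emph{equivariantly} the fixed $H$-subspace, so that the induced $H$-action on the fiber $\xi_n$ is the one whose fixed points compute $\xi_{\underline{m},H}$, and that tensoring with $U$ then contributes exactly $U^H$ fiberwise — this is where the splitting of $W$ into isotypical pieces from Proposition \ref{prop:fixed-pt gr} is used in an essential way. Everything else is the same bookkeeping with one-point compactifications and Thom isomorphisms already carried out for $\C P(V)$.
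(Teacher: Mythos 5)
Your proposal follows essentially the same approach as the paper: write the Thom space as $D/S$, compute $H$-fixed points using the base decomposition from Proposition \ref{prop:fixed-pt gr} and a fiberwise identification of the fixed subbundle, convert the compactified disjoint union into a wedge, and then invoke the Thom isomorphism over each component (a product of ordinary complex Grassmannians) to get even-degree homology. You are slightly more explicit than the paper about the fiberwise identification of the $H$-fixed subbundle as $U^H\otimes\xi_{\underline m,H}$, but that is the same point the paper addresses (implicitly) by analogy with \eqref{eq:fixed disc bundle}.
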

\begin{proof}
Under the assumption that $U\otimes\xi_{n}$ has a $G$-invariant inner product,
$$\text{Th}(Gr_n(V), U\otimes\xi_{n})\cong D(Gr_n(V), U\otimes\xi_{n})/S(Gr_n(V), U\otimes\xi_{n}).$$
We then derive the space level $H$-fixed points 
\begin{align*}
    \text{Th}(Gr_n(V), U\otimes\xi_{n})^H
    &\simeq (D(Gr_n(V), U\otimes\xi_{n})^H\setminus S(Gr_n(V), U\otimes\xi_{n})^H)^+\\
    &\simeq(\bigsqcup_{\underline{m}}(D(G_{\underline{m},H}(V)^H, U^H\otimes\xi_{\underline{m},H})\setminus S(G_{\underline{m},H}(V)^H, U^H\otimes\xi_{\underline{m},H})))^+\\
     &\simeq\bigvee_{\underline{m}}\text{Th}(G_{\underline{m},H}(V')^H, U^H\otimes\xi_{\underline{m},H}).
\end{align*}

Moreover the space $G_{\underline{m},H}$ is a product of lower Grassmannians in \eqref{eq:not gr m,h}, and thus it has a CW complex structure with even-dimensional cells. Therefore the second statement follows from the Thom isomorphism. 
\end{proof}

We now prove our main result by using a filtered cofiber sequence of \eqref{eq:sim seq dim 1 gr}. For this we rewrite Definition \ref{def:dim 1 gr} in a more general form as follows. 
 \begin{defn}\label{def:dim k gr}
     Let $V_0$ be a subspace of $V$. Define 
     $$Gr_n(V)_{V_{0,k}}=\{W\in Gr_n(V) ~|~ \text{dim}(W\cap V_0)\geq k\}.$$
 \end{defn}
This formulation yields a complete filtration
  $$Gr_n(V_0)= Gr_n(V)_{V_{0,n}}\subseteq\cdots\subseteq Gr_n(V)_{V_{0,1}}\subseteq Gr_n(V)_{V_{0,0}}=Gr_n(V)$$
and a cofiber sequence
\begin{myeq} \label{eq:seq dim k gr}
  Gr_n(V)_{V_{0,k+1}+}\to Gr_n(V)_{V_{0,k}+}\to Gr_n(V)_{V_{0,k}}/Gr_n(V)_{V_{0,k+1}}. 
\end{myeq}

In the same spirit as before, consider a commutative triangle
\[\begin{tikzcd}
	W & V \\
	& {V'}
	\arrow[hook, from=1-1, to=1-2]
	\arrow[hook', from=1-1, to=2-2]
	\arrow["p", from=1-2, to=2-2]
\end{tikzcd}\]
so that $\text{dim}(W\cap V_0)=k-1$ in $W$ and $\text{dim}(W\cap V_0)=n-k+1$ in $V'$. Define a map
$$Gr_n(V)_{V_{0,k}}\setminus Gr_n(V)_{V_{0,k+1}}\to Gr_{n-k}(V')\times Gr_k(V_0), W\mapsto (p(W),W\cap V_0)$$
where $W$ equals $(W\cap V_0)\oplus W'$, with $W'$ as the orthogonal complement of $W\cap V_0$. Then for every $W'\in Gr_{n-k}(V)$, satisfying $p(W')=p(W)$, maps isomorphically onto $p(W)$ in $Gr_{n-k}(V')$. Therefore, $W'$ is a graph of a linear map from $p(W)$ to $V_0$. This shows that it canonically associates with $W'$ as an element of the vector bundle 
\begin{myeq}\label{eq:n-k bundle gr}
    E(V_0\otimes\xi_{n-k})\xrightarrow[]{V_0\otimes\xi_{n-k}}Gr_{n-k}(V')
\end{myeq}where $\xi_{n-k}$ is the canonical $(n-k)$-bundle over $Gr_{n-k}(V')$. If $k=0$, then $\xi_{n}$ is the canonical $n$-bundle over $Gr_n(V')$ as in \eqref{eq:n bundle gr}. Then we get a $G$-homeomorphism
$$ Gr_n(V)_{V_{0,k}}\setminus Gr_n(V)_{V_{0,k+1}}\cong E(V_0\otimes\xi_{n-k})\times Gr_k(V_0)$$
and homotopy equivalences
\begin{align*}
    Gr_n(V)_{V_{0,k}}/ Gr_n(V)_{V_{0,k+1}}
    &\simeq (E(V_0\otimes\xi_{n-k})\times Gr_k(V_0))^+\\
    &\simeq\text{Th}(Gr_{n-k}(V'),V_0\otimes\xi_{n-k})\land Gr_k(V_0)_+.
\end{align*}
From this we simplify the cofiber sequence \eqref{eq:seq dim k gr} to
\begin{myeq} \label{eq:sim seq dim n k gr}
  Gr_n(V)_{V_{0,k+1}+}\to  Gr_n(V)_{V_{0,k}+}\to \text{Th}(Gr_{n-k}(V'),V_0\otimes\xi_{n-k})\land Gr_k(V_0)_+.
\end{myeq}

Under the above construction we produce the stable decomposition of $Gr_n(V)$.
\begin{thm}\label{thm:general split gr}
    There is a stable homotopy equivalence
    $$\Sigma^\infty Gr_n(V)_+\simeq_\Q\bigvee_{k=1}^{n}\text{Th}(Gr_k(V'), V_0\otimes\xi_k)\land Gr_{n-k}(V_0)_+\bigvee Gr_n(V_0)_+$$
    where $\xi_k$ is defined as in \eqref{eq:n-k bundle gr}.
\end{thm}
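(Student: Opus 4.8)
The plan is to mimic the inductive splitting arguments used for Theorems~\ref{splitting CP(V)}, \ref{thm:split Gr_n(V)}, and \ref{thm:split thom space}, now applied to the filtration
$$Gr_n(V_0)= Gr_n(V)_{V_{0,n}}\subseteq\cdots\subseteq Gr_n(V)_{V_{0,1}}\subseteq Gr_n(V)_{V_{0,0}}=Gr_n(V)$$
together with the identified cofiber sequences \eqref{eq:sim seq dim n k gr}. First I would set up a downward induction on $k$ (from $k=n$ down to $k=0$), with the hypothesis that $Gr_n(V)_{V_{0,k}+}$ splits rationally as a wedge of the subquotients already peeled off, namely
$$Gr_n(V)_{V_{0,k}+}\simeq_\Q Gr_n(V_0)_+\bigvee\bigvee_{j=k+1}^{n}\text{Th}(Gr_{n-j}(V'),V_0\otimes\xi_{n-j})\land Gr_j(V_0)_+,$$
the base case $k=n$ being trivial since $Gr_n(V)_{V_{0,n}}=Gr_n(V_0)$. (Re-indexing $j \mapsto k$ via $k \leftrightarrow n-j$ recovers the stated form of the theorem.) The inductive step requires showing that the connecting map in \eqref{eq:sim seq dim n k gr},
$$\text{Th}(Gr_{n-k}(V'),V_0\otimes\xi_{n-k})\land Gr_k(V_0)_+\xrightarrow{\delta_k}\Sigma\, Gr_n(V)_{V_{0,k+1}+},$$
is rationally null.

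To kill $\delta_k$, I would invoke the algebraic model \eqref{eq:id alg model}: it suffices to check that for every subgroup $H\leq G$ and every $n$, the map $\pi_n$ of geometric fixed points is zero, and for this it is enough that the source of $\Phi^H(\delta_k)$ has rational homology concentrated in even degrees while the target $\Sigma\, Gr_n(V)_{V_{0,k+1}}^H{}_+$ has rational homology concentrated in odd degrees. The target parity follows from the inductive hypothesis: rationally $Gr_n(V)_{V_{0,k+1}+}$ is a wedge of spaces of the form $\text{Th}(Gr_{n-j}(V'),V_0\otimes\xi_{n-j})\land Gr_j(V_0)_+$ with $j\geq k+1$ (plus $Gr_n(V_0)_+$), whose $H$-fixed points have even rational homology by Proposition~\ref{prop:fixed thom n gr} applied with $U = V_0$ (together with $\Phi^H$ commuting with smash products and the fact that $(Gr_j(V_0))^H$ is a disjoint union of products of lower Grassmannians, hence has even-dimensional cells by Proposition~\ref{prop:fixed-pt gr}), so suspending makes them odd. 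For the source, I would again use that $\Phi^H$ commutes with smash: $\Phi^H(\text{Th}(Gr_{n-k}(V'),V_0\otimes\xi_{n-k})\land Gr_k(V_0)_+)\simeq \text{Th}(Gr_{n-k}(V'),V_0\otimes\xi_{n-k})^H\land (Gr_k(V_0)^H)_+$, and both factors have even rational homology --- the first by Proposition~\ref{prop:fixed thom n gr}, the second by Proposition~\ref{prop:fixed-pt gr} --- so the smash has even rational homology by the K\"unneth theorem. Hence the Hom-groups in \eqref{eq:id alg model} vanish, $\delta_k\simeq_\Q 0$, and the cofiber sequence splits, completing the induction.

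The main obstacle I anticipate is not the parity bookkeeping but making sure the geometric fixed point functor interacts correctly with the Thom-space-of-a-Grassmannian-bundle constructions appearing in \eqref{eq:sim seq dim n k gr}. Concretely, one must be confident that $\Phi^H$ applied to $\text{Th}(Gr_{n-k}(V'),V_0\otimes\xi_{n-k})\land Gr_k(V_0)_+$ genuinely produces the wedge of Thom spaces of restricted bundles over fixed-point components described in Proposition~\ref{prop:fixed thom n gr}, including the $Gr_k(V_0)$ smash factor. This is essentially a compatibility check: the $H$-fixed points of a product bundle $E(V_0\otimes\xi_{n-k})\times Gr_k(V_0)$ decompose as the product of the fixed points of each factor, the one-point compactification commutes with taking fixed points of such finite-dimensional $H$-spaces, and Proposition~\ref{prop:fixed thom n gr} (which was stated for $\text{Th}(Gr_n(V),U\otimes\xi_n)$ with general $U$) already records exactly the needed statement for the first factor. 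So the argument reduces, as in Theorems~\ref{splitting CP(V)}--\ref{thm:split thom space}, to assembling these fixed-point computations and the parity count; once the induction hypothesis is correctly phrased so that every summand appearing at stage $k+1$ has even rational fixed-point homology, there is no further subtlety, and iterating the theorem (applying it again to $Gr_{n-k}(V')$ and to $Gr_j(V_0)$ for reducible $V'$ and $V_0$) reduces the factors to Grassmannians over irreducible representations as claimed in the introduction.
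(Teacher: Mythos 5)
Your argument takes essentially the same route as the paper: a downward induction on $k$ along the filtration by $\text{dim}(W\cap V_0)$ using the cofiber sequences \eqref{eq:sim seq dim n k gr}, killing each connecting map $\delta_k$ via the algebraic model \eqref{eq:id alg model} by comparing parities of rational homology of geometric $H$-fixed points. Your explicit appeals to $\Phi^H$ commuting with smash products, to Proposition~\ref{prop:fixed thom n gr} with $U=V_0$, to Proposition~\ref{prop:fixed-pt gr} for the $Gr_k(V_0)_+$ factor, and to the K\"unneth theorem spell out details the paper's proof leaves compressed, and your anticipated ``obstacle'' (compatibility of $\Phi^H$ with the Thom-space-over-a-Grassmannian constructions) is correctly discharged by observing that Proposition~\ref{prop:fixed thom n gr} already records exactly the needed identification.

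One small slip: your stated inductive hypothesis should have the wedge run over $j=k,\dots,n-1$ rather than $j=k+1,\dots,n$. The cofiber sequence \eqref{eq:sim seq dim n k gr} peels off the subquotient indexed by $k$ when descending from level $k+1$ to level $k$, so at level $k$ the accumulated summands are those with indices $k,k+1,\dots,n-1$. As written, your $j=n$ term $\text{Th}(Gr_{0}(V'),V_0\otimes\xi_{0})\wedge Gr_n(V_0)_+ \cong Gr_n(V_0)_+$ duplicates the separate $Gr_n(V_0)_+$ summand, and the re-indexing $k\leftrightarrow n-j$ at $k=0$ then fails to match the theorem. This is a harmless bookkeeping error that does not affect the validity of the parity argument or the overall structure of the induction; with the index range corrected, the proposal is sound.
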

\begin{proof} 
We prove by induction on indexing $k$ that the cofiber sequence \eqref{eq:sim seq dim n k gr}
  $$ Gr_n(V)_{V_{0,k+1}+}\to Gr_n(V)_{V_{0,k}+}\to \text{Th}(Gr_{n-k}(V'),V_0\otimes\xi_{n-k})\land Gr_{k}(V_0)_+$$
  splits in $\Spec_\Q^G$.
In the first step we have the cofiber sequence \eqref{eq:sim seq dim 1 gr}
$$Gr_n(V)_{V_{0,1}+}\to Gr_n(V)_+\to \text{Th}(Gr_n(V'),V_0\otimes\xi_n)$$  
and its $H$-fixed connecting map
$$\text{Th}(Gr_n(V'),V_0\otimes\xi_n)^H\xrightarrow[]{\delta_n^H}\Sigma (Gr_n(V)_{V_{0,1}})_+^H.$$
Following Propositions \ref{prop:fixed-pt gr} and \ref{prop:fixed thom n gr}, the rational homology groups of the right side are concentrated in odd degrees, while those of the left side are concentrated in even degrees, by induction. This shows that the connecting map is the zero map by \eqref{eq:id alg model}. Then 
\begin{myeq}\label{eq:split gr proof}
    Gr_n(V)_+\simeq_\Q Gr_n(V)_{V_{0,1}+}\bigvee \text{Th}(Gr_n(V'),V_0\otimes\xi_n).
\end{myeq}

However from the inductive step of \eqref{eq:sim seq dim n k gr}, we have the lower decompositions
\begin{align*}
\begin{split}
   Gr_n(V)_{V_{0,1}+}\simeq_\Q {}& Gr_n(V)_{V_{0,2}+}\bigvee \text{Th}(Gr_{n-1}(V'), V_{0}\otimes\xi_{n-1})\land Gr_{1}(V_0)_+
\end{split}\\
\begin{split}
    Gr_n(V)_{V_{0,2}+}\simeq_\Q {}& Gr_n(V)_{V_{0,3}+}\bigvee \text{Th}(Gr_{n-2}(V'), V_{0}\otimes\xi_{n-2})\land Gr_2(V_0)_+ 
    \end{split}\\
  \begin{split}  
    \vdots
\end{split}\\
      Gr_n(V)_{V_{0,n-1}+}\simeq_\Q {}& Gr_n(V)_{V_{0,n}+}\bigvee \text{Th}(Gr_{1}(V'), V_{0}\otimes\xi_{1})\land Gr_{n-1}(V_0)_+.
\end{align*}
The proof is completed by combining all the lower decompositions of  $Gr_n(V)_{V_{0,k}}$ to yield $Gr_n(V)_{V_{0,1}}$ and applying \eqref{eq:split gr proof}.
\end{proof}

\noindent

\bibliographystyle{siam}
\bibliography{REGr}
\end{document}